\newcommand{\R}{\mathbb{R}}
\newcommand{\Brackets}[1]{\left( #1 \right)}
\newcommand{\SquareBrackets}[1]{\left[ #1\right]}
\newcommand{\Braces}[1]{\left\{ #1\right\}}
\newcommand{\FuncAction}[2]{\left\langle #1,#2 \right\rangle}
\newcommand{\Norm}[1]{\left\lVert #1 \right\rVert}
\newcommand{\Seminorm}[1]{\left\lvert #1 \right\rvert}
\newcommand{\vertiii}[1]{{\left\vert\kern-0.25ex\left\vert\kern-0.25ex\left\vert #1\right\vert\kern-0.25ex\right\vert\kern-0.25ex\right\vert}}
\newcommand{\dx}{\,\mathrm{d}}
\providecommand{\Matrix}[1]{\mathbf{#1}}
\DeclareMathOperator{\Span}{span}
\DeclareMathOperator{\Div}{div}
\DeclareMathOperator{\Supp}{supp}
\DeclareMathOperator{\Int}{int}
\DeclareMathOperator{\Cl}{cl}
\DeclareMathOperator{\Argmin}{argmin}
\DeclareMathOperator{\Meas}{meas}
\DeclareMathSymbol{\shortminus}{\mathbin}{AMSa}{"39}
\newcommand{\GammaD}{{\Gamma_{\textnormal{D}}}}
\newcommand{\GammaN}{{\Gamma_{\textnormal{N}}}}
\newcommand{\Half}{\frac{1}{2}}
\theoremstyle{plain}
\newtheorem{theorem}{Theorem}
\newtheorem{lemma}{Lemma}
\newtheorem{corollary}{Corollary}
\newtheorem{definition}{Definition}
\newtheorem{remark}{Remark}
\title{Constraint energy minimizing generalized multiscale finite element method for inhomogeneous boundary value problems with high contrast coefficients}
\author{ \href{https://orcid.org/0000-0002-0085-6699}{\includegraphics[scale=0.06]{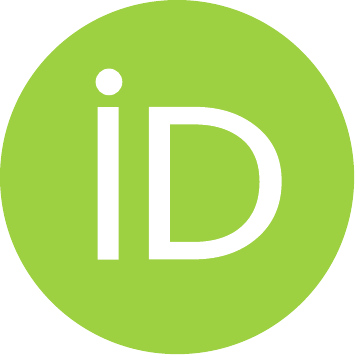}\hspace{1mm}Changqing Ye}\\
Department of Mathematics\\
The Chinese University of Hong Kong\\
Hong Kong Special Administrative Region \\
\texttt{cqye@math.cuhk.edu.hk} \\
\And
\href{https://orcid.org/0000-0002-3096-3399}{\includegraphics[scale=0.06]{orcid.pdf}\hspace{1mm}Eric T. Chung}\\
Department of Mathematics\\
The Chinese University of Hong Kong\\
Hong Kong Special Administrative Region \\
\texttt{tschung@math.cuhk.edu.hk} \\
}
\begin{document}
\maketitle

\begin{abstract}
In this article we develop the Constraint Energy Minimizing Generalized Multiscale Finite Element Method (CEM-GMsFEM) for elliptic partial differential equations with inhomogeneous Dirichlet, Neumann, and Robin boundary conditions, and the high contrast property emerges from the coefficients of elliptic operators and Robin boundary conditions. By careful construction of multiscale bases of the CEM-GMsFEM, we introduce two operators $\mathcal{D}^m$ and $\mathcal{N}^m$ which are used to handle inhomogeneous Dirichlet and Neumann boundary values and are also proved to converge independently of contrast ratios as enlarging oversampling regions. We provide a priori error estimate and show that oversampling layers are the key factor in controlling numerical errors. A series of experiments are conducted, and those results reflect the reliability of our methods even with high contrast ratios.
\end{abstract}

\keywords{Constraint energy minimization \and multiscale finite element methods \and high contrast problems \and inhomogeneous boundary value problems}

\section{Introduction}
Many practical problems drive us to study partial differential equations (PDE) with inhomogeneous coefficients. For example, Darcy's law in inhomogeneous or even fractured media, elasticity systems in composite materials. When coefficients show special structures, such as periodicity and stochasticity, a great number of mathematical theories have been established \cite{Bensoussan2011,DalMaso1993,Jikov1994,Pankov1997,Conca1997,Cioranescu1999,Cioranescu2008,Tartar2009,Shen2018,Armstrong2019}, which have been cornerstones of multiscale modeling and simulations. As for general inhomogeneous coefficients which are usually accompanied by high contrast channels, it has been viewed as a long-standing challenge for traditional methods. The reason is in two aspects: channelized structures require fine meshes which dramatically increase freedom degrees, and high contrast ratios deteriorate convergences of solvers for final linear systems.

To handle those problems, many multiscale computational methods have been developed since the 1990s. To name a few, multiscale finite element methods \cite{Hou1997,Hou1999,Chen2003,Efendiev2009}, Heterogeneous Multiscale Methods (HMM) \cite{Weinan2003,Weinan2005,Abdulle2012}, variational multiscale methods \cite{Hughes1995,Brezzi1997,Hughes2007}, generalized finite element methods \cite{Babuska2011,Babuska2020}, Generalized Multiscale Finite Element Methods (GMsFEM) \cite{Efendiev2013,Chung2014,Chung2016}, and Localized Orthogonal Decomposition (LOD) methods \cite{Maalqvist2014,Henning2014,Altmann2021,Hellman2017,Maalqvist2021}. A universal thought in those methods (except HMMs) is encoding fine-scale information into basis functions of Finite Element Methods (FEM), then solving original problems on multiscale finite element spaces whose dimensions have been greatly reduced compared to default FEMs. We also notice that most existing literature in multiscale computational methods chooses \emph{homogeneous} Dirichlet or Neumann Boundary Value Problems (BVP) as model problems to study convergence theories and conduct numerical experiments, while extensions of those methods to \emph{inhomogeneous} BVPs are sometimes nontrivial (e.g., \cite{Henning2014}). Since solving inhomogeneous BVPs is a practical demand from the application side, it is necessary to examine the effectiveness of existing multiscale computational methods and extend them to complex BVPs.

This work is based on Constrained Energy Minimizing Generalized Multiscale Finite Element Methods (CEM-GMsFEM), which was originally proposed in \cite{Chung2018} for high contrast problems and are applied to many applications \cite{Vasilyeva2019,Wang2021,Chung2020,Chung2018}. Note that there are two versions of CEM-GMsFEMs proposed in \cite{Chung2018}, and we focus on the modified one---\emph{relaxed} CEM-GMsFEM, which shows advantage both in theories and implementations. However, according to the construction of multiscale bases in CEM-GMsFEMs---solving energy minimizing problems on oversampling regions, either pressures or flow rates (terms from Darcy's law) of basis functions vanish on boundaries, which implies that it cannot be directly utilized in inhomogeneous BVPs. Moreover, additional physical coefficients will be introduced in Robin boundary conditions, and those coefficients may also be high contrast. Those reasons lead us to reconsider how to apply CEM-GMsFEMs to inhomogeneous BVPs in high contrast settings.

There are many common points in CEM-GMsFEMs and LOD methods, for example, both rely on exponential decay properties of bases and need mesh size-dependent oversampling regions to achieve an optimal convergence rate. We emphasize that large oversampling regions are essential here because there are theoretical evidences that reveal high contrast is strongly related to nonlocality \cite{Bellieud1998,Briane2002,Du2020} . The major difference is that CEM-GMsFEMs solve element-wise eigenvalue problems to obtain auxiliary spaces and projection operators (such an idea is originated from GMsFEMs \cite{Efendiev2013}), while LOD methods adopt quasi-interpolation operators, such as the Scott-Zhang operator \cite{Scott1990}. Since eigenvalue problems have integrated coefficient information, exponential decay rates are now explicitly dependent on $\Lambda$, where $\Lambda$ is the minimal eigenvalue that the corresponding eigenvector is not included in auxiliary spaces. From numerical experiments, $\Lambda$ is quite stable with varying contrast ratios. A minor difference from implementations is that \emph{relaxed} CEM-GMsFEMs deal with quadratic form minimization problems in constructing multiscale bases, while LOD methods need to solve saddle point problems.

The paper is organized as follows: in \cref{sec:pre}, we introduce some preliminaries; in \cref{sec:cma}, we present details of the methods, provide a rigorous analysis of numerical errors and extend the computational framework to inhomogeneous Robin BVPs; in \cref{sec:experiment}, we conduct a series of numerical experiments to verify accuracy of our methods in high contrast settings.

\section{Preliminaries}\label{sec:pre}
Denote by $\Omega \subset \R^d$ ($d=2$ or $3$) a Lipschitz domain and $\Matrix{A}(x)\in L^\infty\Brackets{\Omega;\R^{d\times d}}$ a matrix-valued function defined on $\Omega$, we consider the following model problem:
\begin{equation}\label{eq:model problem}
\left\{
\begin{aligned}
&-\Div \Brackets{\Matrix{A}\nabla u} = f \ &\text{in}\  \Omega, \\
&u = g \ &\text{on}\ \GammaD, \\
&\nu \cdot \Matrix{A}\nabla u = q \ &\text{on}\ \GammaN,
\end{aligned}
\right.
\end{equation}
where $\nu$ stands for outward unit normal vectors to $\partial \Omega$, $\GammaD$ and $\GammaN$ are two \emph{nonempty} disjointed parts of $\partial \Omega$. In this paper, we present the following assumptions for the model problem:

\paragraph{A1} There exist positive constants $0<A_1\leq A_2$ such that for a.e. $x\in \Omega$, $\Matrix{A}(x)$ is a positive define matrix with
\[
A_1\leq \lambda_{\min}\Brackets{\Matrix{A}(x)} \leq \lambda_{\max}\Brackets{\Matrix{A}(x)} \leq A_2.
\]

\paragraph{A2} The source term $f\in L^2(\Omega)$, the Dirichlet boundary value term $g\in H^{\Half}(\GammaD)$ and the Neumann boundary value term $q\in L^2(\GammaN)$.

We rewrite the inhomogeneous BVP \cref{eq:model problem} in a variational form for designing computational methods: find a solution $u_0\in V=\Braces{v\in H^1(\Omega): v=0 \ \text{on}\ \GammaD}$ such that for all $v\in V$,
\begin{equation}\label{eq:model problem varia}
\int_\Omega \Matrix{A}\nabla u_0\cdot \nabla v\dx x = \int_\Omega f v\dx x-\int_\Omega \Matrix{A} \nabla \tilde{g}\cdot \nabla v\dx x+\int_\GammaN q v\dx \sigma,
\end{equation}
where $\tilde{g} \in H^1(\Omega)$ with $\tilde{g}=g$ on $\GammaD$ in the trace sense. Obviously, $u_0$ and $u$ the solution of the original BVP have the relation $u=u_0+\tilde{g}$.

To simply notations, we denote by $a(w,v)$ the bilinear form $\int_\Omega \Matrix{A}\nabla w\cdot \nabla v \dx x$ on $V$, and $\Norm{v}_a=\sqrt{a(v,v)}$. For a subdomain $\omega \subset \Omega$, we also introduce a notation $\Norm{v}_{a(\omega)}=\sqrt{\int_\omega \Matrix{A}\nabla v\cdot \nabla v\dx x}$.

Let $\mathcal{T}^H$ be a conforming partition of $\Omega$ into elements, such as triangulations/quadrilations for 2D domains \cite{Brenner2008}, where $H$ is the coarse-mesh size to distinguish with another mesh $\mathcal{T}^h$ which will be utilized to compute multiscale basis functions, also let $N$ be the number of elements. For each $K_i\in \mathcal{T}^H$ with $1\leq i \leq N$, we define an oversampled domain $K_i^m$ ($m\geq 1$) by an iterative approach:
\[
K_i^m = \Int\Braces{\SquareBrackets{\bigcup_{\substack{K\in \mathcal{T}^H \\ \Cl(K)\cap \Cl\Brackets{K_i^{m-1}}\neq \varnothing}}\Cl\Brackets{K}} \cup \Cl\Brackets{K_i^{m-1}}},
\]
where $\Int\Brackets{S}$ and $\Cl\Brackets{S}$ are the interior and the closure of a set $S$, and we also set $K_i^0 \coloneqq K_i$ here for consistency purposes. Letting $N_{\textnormal{v}}$ be the number of vertices contained in an element (i.e., $N_\textnormal{v}=3$ for a triangular mesh and $N_\textnormal{v}=4$ for a quadrilateral mesh), we can construct a set of Lagrange bases $\Braces{\eta_i^1,\eta_i^2,\dots,\eta_i^{N_\textnormal{v}}}$ of the element $K_i\in \mathcal{T}^H$. Then we define $\tilde{\kappa}(x)$ piecewisely by
\begin{equation}\label{eq:kappa}
\tilde{\kappa}(x)\coloneqq \Brackets{N_{\textnormal{v}}-1}\sum_{j=1}^{N_\textnormal{v}} \Matrix{A}(x)\nabla \eta_i^j\cdot \nabla \eta_i^j
\end{equation}
in $K_i$. An important concept in CEM-GMsFEM is the bilinear form $s(w,v)\coloneqq \int_\Omega \tilde{\kappa} w v\dx x$, and note that $s(w,v)$ can be validly defined on $L^2(\Omega)$. Similary, we denote by $\Norm{v}_s\coloneqq \sqrt{s(v,v)}$, and $\Norm{v}_{s(\omega)}\coloneqq \sqrt{\int_\omega \tilde{\kappa} \Seminorm{v}^2 \dx x}$ for a subdomain $\omega \subset \Omega$.

The construction of the local auxiliary space $V^\textnormal{aux}_i$ is by solving an eigenvalue problem in the element $K_i$: find $\lambda_i \geq 0$ and $\phi_i\in H^1(K_i)$ such that for all $v\in H^1(K_i)$,
\[
\int_{K_i}\Matrix{A}\nabla \phi_i\cdot \nabla v\dx x=\lambda_i \int_{K_i}\tilde{\kappa} \phi_i v \dx x.
\]
We arrange the eigenvalues $\Braces{\lambda_i^j}_{j=0}^{\infty}$ in ascending order, and notice that $\lambda_i^0=0$ always holds. Denote by $V^\textnormal{aux}_i\coloneqq \Span\Braces{\phi_i^0,\phi_i^1,\dots,\phi_i^{l_i}}$, we can show that the orthogonal projection $\pi_i$ (respect to the inner product $s(\cdot,\cdot)$) from $L^2(K_i)$ onto $V^\textnormal{aux}_i$ is \footnote{We implicitly utilize a zero-extension here, which extends $V_i^\textnormal{aux}$ into $L^2(\Omega)$}
\[
\pi_i(v)\coloneqq \sum_{j=0}^{l_i} \frac{s(\phi_i^j, v)}{s(\phi_i^j, \phi_i^j)}\phi_i^j.
\]
We can immediately derive the following basic estimates: for all $v \in H^1(K_i)$,
\begin{equation} \label{eq:elem esti}
\begin{aligned}
&\Norm{v-\pi_i v}_{s(K_i)}^2 \leq \frac{\Norm{v}_{a(K_i)}^2}{\lambda_i^{l_i+1}}; \\
&\Norm{\pi_i v}_{s(K_i)}^2 = \Norm{v}_{s(K_i)}^2-\Norm{v-\pi_i v}_{s(K_i)}^2\leq \Norm{v}_{s(K_i)}^2.
\end{aligned}
\end{equation}
The global auxiliary space $V^\textnormal{aux}$ can be defined by taking a direct sum $V^\textnormal{aux}\coloneqq \oplus_i^N V^\textnormal{aux}_i$, and the global projection is $\pi\coloneqq\sum_i^N \pi_i$ accordingly.

Although \cref{eq:elem esti} shows that $V^\textnormal{aux}$ can approximate $V$ satisfyingly and stably with respect to the contrast ratio $A_2/A_1$, functions in $V^\textnormal{aux}$ may not be continuous in $\Omega$, and thus $V^\textnormal{aux}$ cannot be used as a conforming finite element space. The essential thought in CEM-GMsFEMs is ``extending'' $\phi_i^j$ into $V$ by solving an energy minimization problem:
\begin{equation}\label{eq:global basis}
\psi_i^j =\Argmin\Braces{a(\psi, \psi)+s(\pi \psi-\phi_i^j, \pi \psi-\phi_i^j):\psi \in V},
\end{equation}
which is a relaxed version of the following problem:
\[
\Argmin\Braces{a(\psi, \psi):\psi \in V, \pi \psi=\phi_i^j}.
\]
Moreover, it can be proved that $\psi_i^j$ decays exponentially fast away from $K_i$, which implies that solving \cref{eq:global basis} on an oversampling domain is reasonable. Denote by $V_i^m$ the space
\[
\Braces{v\in H^1(K_i^m): v=0 \ \text{on}\ \GammaD \cap \partial K_i^m\ \text{or}\ \Omega \cap \partial K_i^m},
\]
we can see that the zero-extension of a function in $V_i^m$ still belongs to $V$. Then the multiscale basis function $\psi_i^{j,m}$ is defined as follows:
\[
\psi_i^{j,m} =\Argmin\Braces{a(\psi, \psi)+s(\pi \psi-\phi_i^j, \pi \psi-\phi_i^j):\psi \in V_i^m}.
\]
It can be shown that $\psi_i^j$ and $\psi_i^{j,m}$ satisfy a variational form respectively:
\begin{equation}
a(\psi_i^j, v)+s(\pi \psi_i^j, \pi v)=s(\phi_i^j, \pi v)\quad \forall v \in V;
\end{equation}
\begin{equation}\label{eq:local basis varia}
a(\psi_i^{j,m}, v)+s(\pi \psi_i^{j,m}, \pi v)=s(\phi_i^j, \pi v)\quad \forall v \in V_i^m.
\end{equation}
We also introduce notations $V^\textnormal{glo}_\textnormal{ms}\coloneqq \Span\Braces{\psi_i^j}_{0\leq j\leq l_i, 1\leq i\leq N}$ and $V^m_\textnormal{ms}\coloneqq \Span\Braces{\psi_i^{j,m}}_{0\leq j\leq l_i, 1\leq i\leq N}$. A basis property of $V^\textnormal{glo}_\textnormal{ms}$ is the orthogonality to the kernel of $\pi$ with respect to the inner product $a(\cdot,\cdot)$.
\begin{lemma}[\cite{Chung2018}]\label{lem:basic orth}
Let $v\in V^\textnormal{glo}_\textnormal{ms}$, then $a(v, v')=0$ for any $v'\in V$ with $\pi v'=0$. Moreover, if there exists $v\in V$ such that $a(v,v')=0$ for any $v'\in V^\textnormal{glo}_\textnormal{ms}$, then $\pi v=0$.
\end{lemma}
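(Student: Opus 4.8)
\emph{Part 1 (the easy inclusion).} The plan is to reduce to a single basis function by bilinearity and then read the claim straight off the variational characterization of $\psi_i^j$. Concretely, for $v'\in V$ with $\pi v'=0$ I would test the identity $a(\psi_i^j,v')+s(\pi\psi_i^j,\pi v')=s(\phi_i^j,\pi v')$ against such a $v'$: both the second term on the left and the whole right-hand side carry a factor $\pi v'=0$ and vanish, leaving $a(\psi_i^j,v')=0$. Summing against the coefficients of an arbitrary $v\in V^\textnormal{glo}_\textnormal{ms}$ gives $a(v,v')=0$. This direction is routine.

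\emph{Part 2 (the converse).} Suppose $a(v,v')=0$ for every $v'\in V^\textnormal{glo}_\textnormal{ms}$; the goal is $\pi v=0$. First I would test the variational identity for each $\psi_i^j$ against $u=v$ and use symmetry of $a(\cdot,\cdot)$ together with the hypothesis $a(\psi_i^j,v)=a(v,\psi_i^j)=0$ to extract the coupled relation $s(\pi\psi_i^j,\pi v)=s(\phi_i^j,\pi v)$ for all indices. The key idea is then not to probe this with a fixed $\phi_i^j$, but with the distinguished combination reproducing $\pi v$: expand $\pi v=\sum_{i,j}c_i^j\phi_i^j$ in the auxiliary basis and set $\psi_\star\coloneqq\sum_{i,j}c_i^j\psi_i^j\in V^\textnormal{glo}_\textnormal{ms}$, which by linearity solves $a(\psi_\star,u)+s(\pi\psi_\star,\pi u)=s(\pi v,\pi u)$ for all $u\in V$.

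Two evaluations of this problem then close the argument. Testing it against $u=\psi_\star$ gives the energy identity $\Norm{\psi_\star}_a^2+\Norm{\pi\psi_\star}_s^2=s(\pi v,\pi\psi_\star)$, and the linear-combination form of the coupled relation above shows $s(\pi\psi_\star,\pi v)=\Norm{\pi v}_s^2$, so that $\Norm{\psi_\star}_a^2+\Norm{\pi\psi_\star}_s^2=\Norm{\pi v}_s^2$. Testing the same problem against $u=v$ and invoking $a(v,\psi_\star)=0$ once more gives $s(\pi\psi_\star,\pi v)=\Norm{\pi v}_s^2$. Substituting both into the expansion of $\Norm{\pi v-\pi\psi_\star}_s^2$ collapses it to $-\Norm{\psi_\star}_a^2$; since the left-hand side is nonnegative, $\Norm{\psi_\star}_a=0$. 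Because $\GammaD\neq\varnothing$, assumption A1 makes $\Norm{\cdot}_a$ a genuine norm on $V$ via Poincaré--Friedrichs, so $\psi_\star=0$; feeding this back gives $\Norm{\pi v}_s^2=s(\pi\psi_\star,\pi v)=0$, and positive-definiteness of $s(\cdot,\cdot)$ on $V^\textnormal{aux}$ yields $\pi v=0$.

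\textbf{Main obstacle.} Part 1 is immediate; the substance lies in Part 2. The step I expect to be the crux is the choice of test object: realizing that one should form the \emph{global} multiscale function $\psi_\star$ attached to $\pi v$ itself, and then reconcile the energy identity with the orthogonality consequence through the completed-square evaluation of $\Norm{\pi v-\pi\psi_\star}_s^2$. The remaining care is bookkeeping---ensuring the two structural ingredients, coercivity of $a(\cdot,\cdot)$ on $V$ and positive-definiteness of $s(\cdot,\cdot)$ on the auxiliary space, are in place to promote the two vanishing seminorms to genuine vanishing of $\psi_\star$ and of $\pi v$.
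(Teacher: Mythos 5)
The paper does not prove this lemma at all; it is imported from \cite{Chung2018} as a citation, so there is no in-paper argument to compare against. Your proof is correct and is essentially the standard one for the relaxed CEM-GMsFEM: Part 1 follows immediately from the variational characterization of $\psi_i^j$, and in Part 2 the decisive step is exactly the one you identify, namely forming $\psi_\star=\sum_{i,j}c_i^j\psi_i^j$ from the expansion $\pi v=\sum_{i,j}c_i^j\phi_i^j$, so that $a(\psi_\star,u)+s(\pi\psi_\star,\pi u)=s(\pi v,\pi u)$ for all $u\in V$, and then combining $\Norm{\psi_\star}_a^2+\Norm{\pi\psi_\star}_s^2=s(\pi v,\pi\psi_\star)=\Norm{\pi v}_s^2$ with the completed square $\Norm{\pi v-\pi\psi_\star}_s^2=-\Norm{\psi_\star}_a^2$ to force both sides to vanish. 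Two minor remarks: once you have $\pi v=\pi\psi_\star$ and $\psi_\star=0$ (via Friedrichs, using $\GammaD\neq\varnothing$ and connectedness of $\Omega$) the conclusion $\pi v=0$ is immediate, so the final pass through $s(\pi\psi_\star,\pi v)$ is redundant; and the positive definiteness of $s(\cdot,\cdot)$ that you invoke at the end deserves the one-line justification that $\tilde{\kappa}>0$ a.e.\ because the gradients $\nabla\eta_i^j$ of the Lagrange basis cannot all vanish simultaneously.
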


\section{Computational method and analysis}\label{sec:cma}
\subsection{Method}
The computational method for solving the model problem \cref{eq:model problem} consists of four steps:

\paragraph{Step1} Find $\mathcal{D}^m_i \tilde{g}\in V_i^m$ and $\mathcal{N}^m_i q \in V_i^m$ such that for all $v\in V_i^m$,
\begin{align}
a(\mathcal{D}^m_i \tilde{g}, v)+s(\pi \mathcal{D}^m_i \tilde{g}, \pi v)&=\int_{K_i} \Matrix{A}\nabla \tilde{g} \cdot \nabla v\dx x,  \label{eq:local diri} \\
a(\mathcal{N}^m_i q, v)+s(\pi \mathcal{N}^m_i q, \pi v)&=\int_{\partial K_i \cap \GammaN} qv\dx \sigma. \label{eq:local neum}
\end{align}
Then take summations as $\mathcal{D}^m \tilde{g} = \sum_{i=1}^N \mathcal{D}^m_i \tilde{g}$ and $\mathcal{N}^m q=\sum_{i=1}^N \mathcal{N}^m_i q$.

\paragraph{Step2} Prepare the multiscale function space $V^m_\textnormal{ms}$ via \cref{eq:local basis varia}.

\paragraph{Step3} Solve $w^m \in V^m_\textnormal{ms}$ such that for all $v \in V^m_\textnormal{ms}$,
\begin{equation}\label{eq:main solver}
a(w^m, v)=\int_\Omega f v\dx x-\int_\Omega \Matrix{A}\nabla\tilde{g}\cdot \nabla v\dx x+\int_\GammaN q v\dx \sigma+a(\mathcal{D}^m \tilde{g},v)-a(\mathcal{N}^m q,v).
\end{equation}

\paragraph{Step4} Construct the numerical solution to approximate the real solution of \cref{eq:model problem} as
\[
u \approx w^m-\mathcal{D}^m \tilde{g}+\mathcal{N}^m q+\tilde{g}.
\]

Note that \cref{eq:local basis varia,eq:local diri,eq:local neum} are all solved on a fine mesh $\mathcal{T}^h$, which is a refinement of $\mathcal{T}^H$. For brevity, we will not explicitly point this out in here and following analysis parts. From the computational steps presented above, the multiscale finite element space $V^m_\textnormal{ms}$ is reusable for different source terms, which will greatly accelerate computations in simulations. Moreover, if several particular boundary values (i.e., $g$ and $q$) that we are interested in admit a low-dimension structure, it is also possible to build abstract operators $\mathcal{D}^m$ and $\mathcal{N}^m$ to achieve an overall saving in computational resources.

\subsection{Analysis} \label{subsec:analysis}
We can also define ``global'' versions of $\mathcal{D}^m$, $\mathcal{N}^m$ and $w^m$ as $\mathcal{D}^\textnormal{glo}\coloneqq \sum_{i=1}^N \mathcal{D}^\textnormal{glo}_i$, $\mathcal{N}^\textnormal{glo}\coloneqq \sum_{i=1}^N \mathcal{N}^\textnormal{glo}_i$ and $w^\textnormal{glo}$ respectively, where $\mathcal{D}^\textnormal{glo}_i$ satisfies for all $v\in V$
\begin{equation}\label{eq:glob diri}
a(\mathcal{D}^\textnormal{glo}_i \tilde{g}, v)+s(\pi \mathcal{D}^\textnormal{glo}_i \tilde{g}, \pi v)=\int_{K_i} \Matrix{A}\nabla \tilde{g} \cdot \nabla v\dx x\quad \forall,
\end{equation}
$\mathcal{N}^\textnormal{glo}_i$ satisfies for all $v\in V$
\begin{equation}\label{eq:glob neum}
a(\mathcal{N}^\textnormal{glo}_i q, v)+s(\pi \mathcal{N}^\textnormal{glo}_i q, \pi v)=\int_{\partial K_i \cap \GammaN} qv\dx \sigma\quad \forall v \in V.
\end{equation}
and $w^\textnormal{glo}$ satisfies for all $v \in V^\textnormal{glo}_\textnormal{ms}$
\begin{equation} \label{eq:glob w}
a(w^\textnormal{glo}, v)=\int_\Omega f v\dx x-\int_\Omega \Matrix{A}\nabla\tilde{g}\cdot \nabla v\dx x+\int_\GammaN q v\dx \sigma+a(\mathcal{D}^\textnormal{glo}\tilde{g},v)-a(\mathcal{N}^\textnormal{glo} q,v).
\end{equation}

The starting point of analyzing CEM-GMsFEMs is that the ``global'' solution possess an optimal error estimate:
\begin{theorem}\label{thm:glob error}
Let $\mathcal{D}^\textnormal{glo}_i \tilde{g}$, $\mathcal{N}^\textnormal{glo}_i q$, $w^\textnormal{glo}$ be the solutions of \cref{eq:glob diri,eq:glob neum,eq:glob w} respectively, $u$ be the real solution of the model problem \cref{eq:model problem}. Then
\begin{equation}
\Norm{w^\textnormal{glo}-\mathcal{D}^\textnormal{glo}\tilde{g}+\mathcal{N}^\textnormal{glo}q+\tilde{g}-u}_a\leq \frac{1}{\sqrt{\Lambda}} \Norm{f}_{s^{-1}},
\end{equation}
where
\[
\Norm{f}_{s^{-1}}\coloneqq \sup_{\substack{v\in L^2(\Omega)\\ v\neq 0}} \frac{\int_\Omega f v\dx x}{\Norm{v}_s},
\]
$\mathcal{D}^\textnormal{glo}\coloneqq \sum_{i=1}^N \mathcal{D}^\textnormal{glo}_i$, $\mathcal{N}^\textnormal{glo}\coloneqq \sum_{i=1}^N \mathcal{N}^\textnormal{glo}_i$ and $\Lambda= \min_i \lambda_i^{l_i+1}$.
\end{theorem}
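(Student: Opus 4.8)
The plan is to reduce the asserted bound to a Galerkin best-approximation estimate and then to activate the spectral gap $\Lambda$ through the element-wise inequality \cref{eq:elem esti}. Writing $u=u_0+\tilde g$ with $u_0\in V$ the solution of \cref{eq:model problem varia}, I first note that $\tilde g-u=-u_0$, so the quantity to be estimated equals $\Norm{(w^\textnormal{glo}-\mathcal{D}^\textnormal{glo}\tilde g+\mathcal{N}^\textnormal{glo}q)-u_0}_a$. The right auxiliary object is $z\coloneqq u_0+\mathcal{D}^\textnormal{glo}\tilde g-\mathcal{N}^\textnormal{glo}q\in V$, so that, setting $e\coloneqq z-w^\textnormal{glo}$, the error is exactly $w^\textnormal{glo}-z=-e$ and the goal becomes $\Norm{e}_a\le \Norm{f}_{s^{-1}}/\sqrt{\Lambda}$.

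First I would record the equation satisfied by $z$ on all of $V$. Summing \cref{eq:glob diri,eq:glob neum} over $i$ and using that the elements $K_i$ partition $\Omega$ and that the boundary pieces $\partial K_i\cap\GammaN$ tile $\GammaN$, I obtain the global identities $a(\mathcal{D}^\textnormal{glo}\tilde g,v)+s(\pi\mathcal{D}^\textnormal{glo}\tilde g,\pi v)=\int_\Omega \Matrix{A}\nabla\tilde g\cdot\nabla v\dx x$ and $a(\mathcal{N}^\textnormal{glo}q,v)+s(\pi\mathcal{N}^\textnormal{glo}q,\pi v)=\int_{\GammaN}qv\dx\sigma$ for all $v\in V$. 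Substituting these two identities into \cref{eq:model problem varia} replaces the $\int_\Omega \Matrix{A}\nabla\tilde g\cdot\nabla v$ and $\int_{\GammaN}qv$ terms by $a$- and $s$-contributions and, after rearranging, yields $a(z,v)=\int_\Omega fv\dx x-s(\pi\mathcal{D}^\textnormal{glo}\tilde g-\pi\mathcal{N}^\textnormal{glo}q,\pi v)$ for all $v\in V$.

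Next I would show that $w^\textnormal{glo}$ is the $a$-orthogonal projection of $z$ onto $V^\textnormal{glo}_\textnormal{ms}$. Inserting the very same global identities into the right-hand side of \cref{eq:glob w}, the energy terms $a(\mathcal{D}^\textnormal{glo}\tilde g,v)$ and $a(\mathcal{N}^\textnormal{glo}q,v)$ cancel against those already present, leaving $a(w^\textnormal{glo},v)=\int_\Omega fv\dx x-s(\pi\mathcal{D}^\textnormal{glo}\tilde g-\pi\mathcal{N}^\textnormal{glo}q,\pi v)$ for all $v\in V^\textnormal{glo}_\textnormal{ms}$ — precisely the functional appearing in the $z$-equation. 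Hence $a(e,v)=a(z-w^\textnormal{glo},v)=0$ for every $v\in V^\textnormal{glo}_\textnormal{ms}$, and the second part of \cref{lem:basic orth} forces $\pi e=0$. The energy estimate then closes quickly: since $\pi e=0$, testing the $z$-equation with $v=e$ annihilates the $s$-term and gives $a(z,e)=\int_\Omega fe\dx x$, while the first part of \cref{lem:basic orth} gives $a(w^\textnormal{glo},e)=0$ because $w^\textnormal{glo}\in V^\textnormal{glo}_\textnormal{ms}$ and $\pi e=0$. Therefore $\Norm{e}_a^2=a(e,e)=\int_\Omega fe\dx x\le \Norm{f}_{s^{-1}}\Norm{e}_s$. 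As $\pi e=0$ forces $\pi_i e=0$ on each $K_i$, the first line of \cref{eq:elem esti} applied to $e$ yields $\Norm{e}_{s(K_i)}^2\le \Norm{e}_{a(K_i)}^2/\lambda_i^{l_i+1}\le \Norm{e}_{a(K_i)}^2/\Lambda$; summing over $i$ gives $\Norm{e}_s\le \Norm{e}_a/\sqrt{\Lambda}$, and combining the two bounds produces $\Norm{e}_a\le \Norm{f}_{s^{-1}}/\sqrt{\Lambda}$, as claimed.

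I expect the genuine obstacle to be the third paragraph: identifying the correct auxiliary function $z=u_0+\mathcal{D}^\textnormal{glo}\tilde g-\mathcal{N}^\textnormal{glo}q$ and seeing that, after substitution of the global operator identities, the energy-form contributions of $\mathcal{D}^\textnormal{glo}$ and $\mathcal{N}^\textnormal{glo}$ cancel so that $z$ and $w^\textnormal{glo}$ carry the \emph{same} purely $s$-weighted functional on $V^\textnormal{glo}_\textnormal{ms}$. This is what makes $e$ orthogonal to $V^\textnormal{glo}_\textnormal{ms}$ and hence places it in the kernel of $\pi$ via \cref{lem:basic orth}; once this Galerkin-orthogonality structure is exposed, the remaining spectral estimate is routine.
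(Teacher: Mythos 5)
Your proposal is correct and follows essentially the same route as the paper: Galerkin orthogonality of the error to $V^\textnormal{glo}_\textnormal{ms}$ combined with \cref{lem:basic orth} to get $\pi e=0$, reduction of the residual to the pure source term $\int_\Omega fe\dx x$ (your explicit substitution of the summed identities \cref{eq:glob diri,eq:glob neum} is just a reorganization of the paper's step of testing against $v$ with $\pi v=0$), and the spectral bound $\Norm{e}_s\le\Norm{e}_a/\sqrt{\Lambda}$ from \cref{eq:elem esti}. No gaps; the argument is sound as written.
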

\begin{proof}
For simplicity, take $e=w^\textnormal{glo}-\mathcal{D}^\textnormal{glo}\tilde{g}+\mathcal{N}^\textnormal{glo}q+\tilde{g}-u$. By a direct computation, it gives for all $v\in V^\textnormal{glo}_\textnormal{ms}$,
\[
a(e,v)=a(w^\textnormal{glo},v)-a(\mathcal{D}^\textnormal{glo}\tilde{g},v)+a(\mathcal{N}^\textnormal{glo}q,v)-a(u_0,v)\overset{\cref{eq:model problem varia}}{=}0.
\]
We hence derive that $\pi e=0$ via \cref{lem:basic orth}. Moreover, for all $v\in V$ with $\pi v=0$, we have $a(w^\textnormal{glo},e)=0$ and
\[
a(e,v)=-a(\mathcal{D}^\textnormal{glo}\tilde{g},v)+a(\mathcal{N}^\textnormal{glo}q,v)-a(u_0,v).
\]
From the definitions of $\mathcal{D}^\textnormal{glo}$ and $\mathcal{N}^\textnormal{glo}$ (see \cref{eq:glob diri,eq:glob neum}) and recalling $\pi v=0$, we can show
\[
a(\mathcal{D}^\textnormal{glo}\tilde{g},v)=\int_\Omega \Matrix{A}\nabla \tilde{g}\cdot \nabla v\dx x \quad \text{and}\quad a(\mathcal{N}^\textnormal{glo}q,v)=\int_\GammaN q v \dx \sigma.
\]
Combining the variation form of $u_0$ \cref{eq:model problem varia}, we conclude that for $v\in V$ with $\pi v=0$
\[
a(e,v) = \int_\Omega f v\dx x.
\]
It follows that
\[
\Norm{e}_a^2=\int_\Omega f e\dx x\leq \Norm{f}_{s^{-1}}\Norm{e}_s=\Norm{f}_{s^{-1}}\Norm{e-\pi e}_s\leq \frac{1}{\sqrt{\Lambda}}\Norm{f}_{s^{-1}}\Norm{e}_a.
\]
\end{proof}

The analysis of CEM-GMsFEM can be summarized in the following abstract problem:
\paragraph{Abstract problem} Let $K_i\in \mathcal{T}^H$ and $t_i \in V'$ such that $\FuncAction{t_i}{v}=0$ holds for any $v\in V$ with $\Supp(v)\subset \Omega \setminus K_i$; define an operator $\mathcal{P}_i:V'\rightarrow V$ with $\mathcal{P}_i t_i$ satisfying for all $v\in V$,
\begin{equation} \label{eq:ab glob varia}
a(\mathcal{P}_i t_i, v)+s(\pi \mathcal{P}_i t_i, \pi v)=\FuncAction{t_i}{v};
\end{equation}
similarly, define $\mathcal{P}^m_i$ and $\mathcal{P}^m_i t_i$ such that for all $v\in V_i^m$,
\begin{equation}\label{eq:ab loca varia}
a(\mathcal{P}^m_i t_i, v)+s(\pi \mathcal{P}^m_i t_i, \pi v)=\FuncAction{t_i}{v}.
\end{equation}
The goal is deriving an estimate on
\[
\Norm{\sum_{i=1}^N\Brackets{\mathcal{P}_i-\mathcal{P}^m_i}t_i}_a^2+\Norm{\pi\sum_{i=1}^N\Brackets{\mathcal{P}_i-\mathcal{P}^m_i}t_i}_s^2.
\]

We complete such an estimate by several lemmas. The first lemma shows that $\mathcal{P}_i t_i$ propose an exponentially decaying property.
\begin{lemma}\label{lem:ab lem 1}
Let $\Lambda = \min_i \lambda_i^{l_i+1}$ and $m\geq 1$, there exists a positive constant $0<\theta<1$ such that
\[
\Norm{\mathcal{P}_i t_i}_{a\Brackets{\Omega \setminus K_i^m}}^2+\Norm{\pi \mathcal{P}_i t_i}_{s\Brackets{\Omega\setminus K_i^m}}^2 \leq \theta^m \Brackets{\Norm{\mathcal{P}_i t_i}_{a}^2+\Norm{\pi \mathcal{P}_i t_i}_{s}^2},
\]
\end{lemma}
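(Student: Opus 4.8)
The plan is to prove the stated exponential decay by an energy-iteration (Caccioppoli-type) argument over the nested oversampling regions, exploiting the locality of $t_i$ together with the design of $\tilde{\kappa}$. Write $\psi=\mathcal{P}_i t_i$ and set $E_m:=\Norm{\psi}_{a(\Omega\setminus K_i^m)}^2+\Norm{\pi\psi}_{s(\Omega\setminus K_i^m)}^2$, so that $E_m$ is nonincreasing in $m$, $E_0\le\Norm{\psi}_a^2+\Norm{\pi\psi}_s^2$, and the claim is $E_m\le\theta^m E_0$. Everything reduces to a one-step contraction $E_m\le C(E_{m-1}-E_m)$ for each $m\ge1$: this yields $E_m\le\frac{C}{1+C}E_{m-1}$, hence geometric decay with $\theta=\frac{C}{1+C}\in(0,1)$, and iterating down to $E_0$ finishes the proof. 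The constant $C$ will depend only on $\Lambda$ and on the constant implicit in \cref{eq:kappa}, both of which are insensitive to the contrast $A_2/A_1$.

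First I would introduce a cutoff $\chi$, continuous and piecewise linear on $\mathcal{T}^H$, with $0\le\chi\le1$, $\chi=0$ on $K_i^{m-1}$ and $\chi=1$ on $\Omega\setminus K_i^m$, so that $\nabla\chi$ is supported in the single coarse annulus $K_i^m\setminus K_i^{m-1}$. Because $\chi$ vanishes on $K_i^{m-1}\supseteq K_i$, the function $v=\chi\psi$ is admissible and $\FuncAction{t_i}{\chi\psi}=0$ by the locality of $t_i$, so the global variational identity \cref{eq:ab glob varia} collapses to $a(\psi,\chi\psi)+s(\pi\psi,\pi(\chi\psi))=0$. I would then expand both pieces. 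For the $a$-part, $a(\psi,\chi\psi)=\int_\Omega\chi\,\Matrix{A}\nabla\psi\cdot\nabla\psi+\int_\Omega\psi\,\Matrix{A}\nabla\psi\cdot\nabla\chi$, where the first integral is bounded below by $\Norm{\psi}_{a(\Omega\setminus K_i^m)}^2$ (since $\chi\ge0$ and $\chi\equiv1$ there) and the cross term lives only on the annulus. For the $s$-part I would use that $\pi$ is the element-wise $s$-orthogonal, $s$-self-adjoint projection, so $s(\pi\psi,\pi(\chi\psi))=s(\pi\psi,\chi\psi)=\int_\Omega\tilde{\kappa}\chi(\pi\psi)\psi$; splitting $\psi=\pi\psi+(\psi-\pi\psi)$ isolates $\int_\Omega\tilde{\kappa}\chi|\pi\psi|^2\ge\Norm{\pi\psi}_{s(\Omega\setminus K_i^m)}^2$ plus a remainder $\int_\Omega\tilde{\kappa}\chi(\pi\psi)(\psi-\pi\psi)$.

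The observation I would verify carefully is that both remainders are confined to the annulus $K_i^m\setminus K_i^{m-1}$. For the $s$-remainder this holds because on any element where $\chi$ is constant the integral reduces to $s(\pi\psi,\psi-\pi\psi)$ over that element, which vanishes by the element-wise $s$-orthogonality of $\pi$; only the transition elements, where $\chi$ is non-constant, contribute. Collecting the two identities gives $E_m\le|\text{annular cross terms}|$. I would bound these by Cauchy--Schwarz: the $a$-cross term by $\Norm{\psi}_{s(K_i^m\setminus K_i^{m-1})}\Norm{\psi}_{a(K_i^m\setminus K_i^{m-1})}$, using the contrast-independent domination $\Matrix{A}\nabla\chi\cdot\nabla\chi\le\tilde{\kappa}$ that \cref{eq:kappa} is engineered to provide for coarse piecewise-linear $\chi$; and the $s$-remainder by $\Norm{\pi\psi}_{s(\cdot)}\Norm{\psi-\pi\psi}_{s(\cdot)}$ on the annulus. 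Finally I would convert each $\Norm{\psi}_s$ and $\Norm{\psi-\pi\psi}_s$ on the annulus into $E_{m-1}-E_m$, via the eigenvalue estimate $\Norm{\psi-\pi\psi}_{s(K)}^2\le\Norm{\psi}_{a(K)}^2/\Lambda$ from \cref{eq:elem esti} together with additivity of both norms over the disjoint decomposition $\Omega\setminus K_i^{m-1}=(\Omega\setminus K_i^m)\cup(K_i^m\setminus K_i^{m-1})$, which gives $\Norm{\psi}_{a(\text{annulus})}^2+\Norm{\pi\psi}_{s(\text{annulus})}^2=E_{m-1}-E_m$.

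The step I expect to be the main obstacle is the $s$-part: correctly handling the non-commutation of the element-wise projection $\pi$ with multiplication by $\chi$, and showing its contribution is simultaneously localized to the annulus and controlled by $E_{m-1}-E_m$ in a contrast-independent manner. Once the contraction $E_m\le C(E_{m-1}-E_m)$ is established, iterating from $E_0\le\Norm{\psi}_a^2+\Norm{\pi\psi}_s^2$ yields the claimed bound.
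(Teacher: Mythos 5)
Your proposal is correct and follows essentially the same route as the paper: test the global variational identity with a coarse piecewise-linear cutoff times $\mathcal{P}_i t_i$ (your $\chi$ is the paper's $1-\chi_i^{m-1,m}$), use the locality of $t_i$ and the domination $\Matrix{A}\nabla\chi\cdot\nabla\chi\le\tilde{\kappa}$ to confine all cross terms to the annulus $K_i^m\setminus K_i^{m-1}$, control them via \cref{eq:elem esti}, and iterate the resulting contraction $E_m\le C(E_{m-1}-E_m)$. The only cosmetic difference is in the $s$-term: you split $\psi=\pi\psi+(\psi-\pi\psi)$ after using self-adjointness of $\pi$, whereas the paper bounds $\Norm{\pi[(\chi-1)\psi]}_s\le\Norm{(\chi-1)\psi}_s$ directly; both yield a constant depending only on $\Lambda$.
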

where $\theta = \frac{c_{*}}{c_{*}+1}$ and
\[
c_{*}(\Lambda)=\max_{x\in [0,\frac{\pi}{2}]} \Brackets{\cos(x)+\sin(x)}\Brackets{\frac{\cos(x)}{\sqrt{\Lambda}}+\sin(x)}.
\]

The next lemma bounds the error between $\mathcal{P}_i t_i$ and $\mathcal{P}_i^m t_i$.
\begin{lemma}\label{lem:ab lem 2}
Keep the notations same as \cref{lem:ab lem 1}, then
\[
\Norm{\Brackets{\mathcal{P}_i-\mathcal{P}_i^m}t_i}_a^2+\Norm{\pi \Brackets{\mathcal{P}_i-\mathcal{P}_i^m}t_i}_s^2 \leq c_{\star}\theta^{m-1}\Brackets{\Norm{\mathcal{P}_i t_i}_{a}^2+\Norm{\pi \mathcal{P}_i t_i}_{s}^2},
\]
where
\[
c_{\star}(\Lambda)=\max_{x\in [0,\frac{\pi}{2}]} \SquareBrackets{\Brackets{\frac{1}{\sqrt{\Lambda}}+1}\cos(x)+\sin(x)}^2+\Brackets{\frac{\cos(x)}{\sqrt{\Lambda}}+\sin(x)}^2.
\]
\end{lemma}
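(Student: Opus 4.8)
The plan is to combine a C\'ea-type best-approximation property with an explicit cutoff competitor, and then to read off the decay from \cref{lem:ab lem 1}. Introduce the symmetric bilinear form $B(w,v)\coloneqq a(w,v)+s(\pi w,\pi v)$, so that $\vertiii{v}^2=B(v,v)=\Norm{v}_a^2+\Norm{\pi v}_s^2$ is exactly the quantity on the left-hand side. Since $\GammaD$ is nonempty, $\Norm{\cdot}_a$ is a norm on $V$ and hence $B$ is an inner product there. By \cref{eq:ab glob varia,eq:ab loca varia}, $\mathcal{P}_i t_i$ and $\mathcal{P}_i^m t_i$ are the Riesz representatives of $t_i$ for $B$ over $V$ and over the subspace $V_i^m\subset V$ respectively; subtracting the two identities and testing with $v\in V_i^m$ yields the Galerkin orthogonality $B\Brackets{\Brackets{\mathcal{P}_i-\mathcal{P}_i^m}t_i,\,v}=0$. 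Consequently $\mathcal{P}_i^m t_i$ is the $B$-orthogonal projection of $\mathcal{P}_i t_i$ onto $V_i^m$ and
\[
\vertiii{\Brackets{\mathcal{P}_i-\mathcal{P}_i^m}t_i}^2=\min_{v\in V_i^m}\vertiii{\mathcal{P}_i t_i-v}^2,
\]
so it suffices to exhibit one convenient competitor.

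For the competitor I would take $v=\chi\,w$ with $w\coloneqq\mathcal{P}_i t_i$, where $\chi$ is a standard coarse-grid cutoff assembled from the Lagrange bases of $\mathcal{T}^H$: $0\le\chi\le1$, $\chi\equiv1$ on $K_i^{m-1}$, $\chi\equiv0$ on $\Omega\setminus K_i^m$, with $\nabla\chi$ supported in the one-layer annulus $K_i^m\setminus K_i^{m-1}$. Because $\chi$ vanishes on $\partial K_i^m$ and $w\in V$, the product lies in $V_i^m$. Writing $w-v=(1-\chi)w$, which is supported in $R\coloneqq\Omega\setminus K_i^{m-1}$, I would bound the two pieces of $\vertiii{w-v}^2$. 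For the $a$-part, the product rule $\nabla\Brackets{(1-\chi)w}=(1-\chi)\nabla w-w\nabla\chi$ and the triangle inequality give $\Norm{(1-\chi)w}_a\le\Norm{w}_{a(R)}+\Brackets{\int_\Omega\Seminorm{w}^2\,\Matrix{A}\nabla\chi\cdot\nabla\chi\dx x}^{1/2}$, and the defining property of $\tilde{\kappa}$ in \cref{eq:kappa}---namely $\Matrix{A}\nabla\chi\cdot\nabla\chi\le\tilde{\kappa}$ for any such coarse cutoff, which is precisely why the factor $N_\textnormal{v}-1$ is inserted---turns the second term into $\Norm{w}_{s(R)}$. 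For the $\pi$-part, locality of $\pi$ forces $\pi\Brackets{(1-\chi)w}$ to vanish on every element contained in $K_i^{m-1}$, and the element-wise non-expansiveness $\Norm{\pi_i u}_{s(K_i)}\le\Norm{u}_{s(K_i)}$ from \cref{eq:elem esti} yields $\Norm{\pi(w-v)}_s\le\Norm{w}_{s(R)}$.

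The step I expect to be the crux is that both estimates are phrased through $\Norm{w}_{s(R)}$, while \cref{lem:ab lem 1} controls only the \emph{projected} quantity $\Norm{\pi w}_{s(R)}$ together with $\Norm{w}_{a(R)}$. The bridge is once more \cref{eq:elem esti}: summing $\Norm{u-\pi_i u}_{s(K_i)}^2\le\Norm{u}_{a(K_i)}^2/\lambda_i^{l_i+1}$ over the elements of $R$ gives $\Norm{w-\pi w}_{s(R)}\le\Lambda^{-1/2}\Norm{w}_{a(R)}$, whence $\Norm{w}_{s(R)}\le\Norm{\pi w}_{s(R)}+\Lambda^{-1/2}\Norm{w}_{a(R)}$, and it is exactly this inequality that injects the $1/\sqrt{\Lambda}$ factors into the final constant. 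Substituting it into the two bounds above produces, with $a\coloneqq\Norm{w}_{a(R)}$ and $b\coloneqq\Norm{\pi w}_{s(R)}$,
\[
\Norm{w-v}_a\le\Brackets{\tfrac{1}{\sqrt{\Lambda}}+1}a+b,\qquad \Norm{\pi(w-v)}_s\le\tfrac{1}{\sqrt{\Lambda}}a+b.
\]

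Finally I would set $a=D\cos x$ and $b=D\sin x$ with $D^2=a^2+b^2$ and $x\in[0,\tfrac{\pi}{2}]$; squaring and adding the two displayed inequalities gives $\vertiii{w-v}^2\le D^2\SquareBrackets{\Brackets{(\tfrac{1}{\sqrt{\Lambda}}+1)\cos x+\sin x}^2+\Brackets{\tfrac{\cos x}{\sqrt{\Lambda}}+\sin x}^2}\le c_\star D^2$, which reproduces exactly the stated $c_\star(\Lambda)$. Applying \cref{lem:ab lem 1} at level $m-1$ (the trivial inclusion bound $D^2\le\Norm{w}_a^2+\Norm{\pi w}_s^2$ covering the degenerate case $m=1$) gives $D^2\le\theta^{m-1}\Brackets{\Norm{\mathcal{P}_i t_i}_a^2+\Norm{\pi\mathcal{P}_i t_i}_s^2}$, and chaining this with the best-approximation identity of the first paragraph yields the claim.
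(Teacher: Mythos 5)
Your proposal is correct and follows essentially the same route as the paper: the paper's decomposition $z_i=z_i'+z_i''$ with $z_i''\in V_i^m$ and the orthogonality $a(z_i,z_i'')+s(\pi z_i,\pi z_i'')=0$ is exactly your C\'ea/best-approximation argument with the same competitor $\chi_i^{m-1,m}\mathcal{P}_i t_i$, the same two bounds carrying the $1/\sqrt{\Lambda}$ factors via \cref{eq:elem esti}, and the same trigonometric parametrization yielding $c_\star$ before invoking \cref{lem:ab lem 1} at level $m-1$. Your explicit handling of the $m=1$ case is a small bonus the paper leaves implicit.
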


We need a regularity assumption for $\mathcal{T}^H$ before presenting the concluding lemma.

\paragraph{A3} There exists a positive constant $C_\textnormal{ol}$ such that for all $K_i\in \mathcal{T}^H$ and $m > 0$,
\[
\#\Braces{K\in \mathcal{T}^H: K\subset K_i^m} \leq C_\textnormal{ol} m^d.
\]

\begin{lemma}\label{lem:ab lem 3}
Keep the notations same as \cref{lem:ab lem 1,lem:ab lem 2}, then
\[
\Norm{\sum_{i=1}^N\Brackets{\mathcal{P}_i-\mathcal{P}^m_i}t_i}_a^2+\Norm{\pi\sum_{i=1}^N\Brackets{\mathcal{P}_i-\mathcal{P}^m_i}t_i}_s^2\leq c_\star^2 C_{\textnormal{ol}} \theta^{m-1}(m+1)^d \sum_{i=1}^N \FuncAction{t_i}{\mathcal{P}_i t_i}.
\]
\end{lemma}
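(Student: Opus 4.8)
The plan is to reduce the global estimate to the per-element bound of \cref{lem:ab lem 2} via a bounded-overlap argument, the overlap count being supplied by Assumption A3. Throughout I write $e_i\coloneqq\Brackets{\mathcal{P}_i-\mathcal{P}_i^m}t_i$ and $z\coloneqq\sum_{i=1}^N e_i$, and abbreviate the combined form by $B(w,v)\coloneqq a(w,v)+s(\pi w,\pi v)$, so that the quantity to be estimated is exactly $\vertiii{z}^2=B(z,z)$ with $\vertiii{v}^2=\Norm{v}_a^2+\Norm{\pi v}_s^2$. First I would dispose of the right-hand side: testing \cref{eq:ab glob varia} with $v=\mathcal{P}_i t_i$ gives the energy identity $\FuncAction{t_i}{\mathcal{P}_i t_i}=\vertiii{\mathcal{P}_i t_i}^2$, so the target is equivalent to $\vertiii{z}^2\leq c_\star^2 C_\textnormal{ol}\theta^{m-1}(m+1)^d\sum_i\vertiii{\mathcal{P}_i t_i}^2$. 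Since \cref{lem:ab lem 2} controls $\vertiii{e_i}^2$ by $c_\star\theta^{m-1}\vertiii{\mathcal{P}_i t_i}^2$, it then suffices to prove a purely geometric inequality of the form $\vertiii{z}^2\leq C_\textnormal{ol}(m+1)^d\sum_i\vertiii{e_i}^2$ and to feed \cref{lem:ab lem 2} into the local factors that appear.

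The core is the bounded-overlap step. Expanding $\vertiii{z}^2=\sum_i B(e_i,z)$ and recalling that both $a(\cdot,\cdot)$ and $s(\pi\cdot,\pi\cdot)$ are element-local (the latter because $\pi=\sum_j\pi_j$ and each $\pi_j$ sees only the values on $K_j$), I would bound each cross term by a localized factor, $B(e_i,z)\leq\vertiii{e_i}\,\vertiii{z}_{K_i^{m+1}}$, where $\vertiii{v}_\omega^2\coloneqq\Norm{v}_{a(\omega)}^2+\Norm{\pi v}_{s(\omega)}^2$. A Cauchy--Schwarz over $i$ then yields $\vertiii{z}^2\leq\Brackets{\sum_i\vertiii{e_i}^2}^{1/2}\Brackets{\sum_i\vertiii{z}_{K_i^{m+1}}^2}^{1/2}$, and the second factor is handled by the overlap count: since $K_i^{m+1}$ is a union of coarse elements, a fixed element $K$ meets $K_i^{m+1}$ exactly when $K_i\subset K^{m+1}$, so by symmetry of the layering relation and Assumption A3 the number of such $i$ is at most $C_\textnormal{ol}(m+1)^d$; hence $\sum_i\vertiii{z}_{K_i^{m+1}}^2\leq C_\textnormal{ol}(m+1)^d\,\vertiii{z}^2$. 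Dividing by $\vertiii{z}$ gives the geometric inequality, and substituting \cref{lem:ab lem 2} with the energy identity produces a bound of the asserted shape.

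The main obstacle is that $e_i$ is \emph{not} compactly supported in $K_i^{m+1}$: because $\mathcal{P}_i t_i$ solves a global problem, $e_i$ coincides with $\mathcal{P}_i t_i$ on $\Omega\setminus K_i^m$ and is only exponentially small there, so the localized estimate $B(e_i,z)\leq\vertiii{e_i}\,\vertiii{z}_{K_i^{m+1}}$ is not literally valid and the overlap count does not apply to $e_i$ directly. The remedy is to split $e_i$ into its restriction to $K_i^{m}$ and a tail on $\Omega\setminus K_i^{m}$; on the latter set the Galerkin orthogonality $B(e_i,v)=0$ for $v\in V_i^m$ (obtained by subtracting \cref{eq:ab glob varia} and \cref{eq:ab loca varia}) together with \cref{lem:ab lem 1} bounds the tail by $\theta^{m}\vertiii{\mathcal{P}_i t_i}^2$, which is of the same order as the quantity under estimation and can be absorbed. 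Carrying this splitting through the Cauchy--Schwarz and overlap steps is the delicate bookkeeping of the proof; the precise power of $c_\star$ in the final constant is governed by how many of the local factors in the cross terms are estimated through \cref{lem:ab lem 2}, and the stated $c_\star^2$ reflects the authors' accounting of this step rather than a sharp count.
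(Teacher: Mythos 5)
Your overall skeleton---the energy identity $\FuncAction{t_i}{\mathcal{P}_i t_i}=\Norm{\mathcal{P}_i t_i}_a^2+\Norm{\pi\mathcal{P}_i t_i}_s^2$, a Cauchy--Schwarz over $i$, the overlap count from Assumption \textbf{A3}, and then feeding in \cref{lem:ab lem 2}---is the same as the paper's. But the one step that carries the whole lemma, namely localizing the cross terms $a(e_i,z)+s(\pi e_i,\pi z)$, is where your argument has a genuine gap. You rightly notice that $e_i$ is not supported in $K_i^{m+1}$, but your remedy (splitting $e_i$ into a part on $K_i^m$ and a tail, and invoking \cref{lem:ab lem 1} on the tail) does not close. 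The tail of $e_i$ pairs against the \emph{global} norm of $z$, so after summing over $i$ you are left with something like $\sum_{i=1}^N\theta^{m/2}\bigl(\Norm{\mathcal{P}_it_i}_a^2+\Norm{\pi\mathcal{P}_it_i}_s^2\bigr)^{1/2}\bigl(\Norm{z}_a^2+\Norm{\pi z}_s^2\bigr)^{1/2}$; a Cauchy--Schwarz in $i$ then produces a factor $\sqrt{N}$, and after squaring the tail contributes $N\theta^{m}\sum_i\FuncAction{t_i}{\mathcal{P}_it_i}$, which for fixed $m$ is worse than the claimed bound by a factor $N\sim H^{-d}$. Asserting that this ``can be absorbed'' is precisely the point that needs proof, and it fails at this level of generality. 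Moreover, the orthogonality you invoke ($a(e_i,v)+s(\pi e_i,\pi v)=0$ for $v\in V_i^m$) concerns test functions supported \emph{inside} $K_i^m$ and cannot control a tail living outside $K_i^m$.

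The paper avoids the tail entirely by cutting off the \emph{test function} rather than $e_i$: write $z=\Brackets{1-\chi_i^{m,m+1}}z+\chi_i^{m,m+1}z\eqqcolon z'+z''$. Then $a(e_i,z')+s(\pi e_i,\pi z')=0$ holds \emph{exactly}, because (i) testing \cref{eq:ab glob varia} with $z'$ gives $\FuncAction{t_i}{z'}=0$ since $z'$ vanishes on $K_i$, and (ii) $\mathcal{P}_i^m t_i$ and $\pi\mathcal{P}_i^m t_i$ have supports disjoint from that of $z'$. Hence the cross term equals $a(e_i,z'')+s(\pi e_i,\pi z'')$ with $z''$ genuinely supported in $K_i^{m+1}$, and bounding $z''$ through the cutoff costs one factor of $c_\star$---this is the origin of the second power of $c_\star$ in the statement, not loose accounting. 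To salvage your variant you would have to decompose the tail of $e_i$ over the annuli $K_i^{k+1}\setminus K_i^{k}$ and sum a geometric series, which is a substantially more delicate argument than what you sketch; as written, the proposal does not prove the lemma.
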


A common trick in proving those lemmas is multiplying a cutoff function $\chi$ to $v$ and then inserting $\chi v$ into variational forms. Here is the definition of cutoff functions.
\begin{definition}\label{def:cutoff}
Let $V^H$ be the Lagrange basis function space of $\mathcal{T}^H$. For an element $K_i \in \mathcal{T}^H$, a cutoff function $\chi_i^{n,m}\in V^H$ satisfies properties:
\begin{equation}
\begin{aligned}
\chi_i^{n,m}(x) \equiv 1\quad &\text{in}\ K_i^n;\\
\chi_i^{n,m}(x) \equiv 0\quad &\text{in}\ \Omega\setminus K_i^{m};\\
0\leq \chi_i^{n,m}(x)\leq 1\quad &\text{in}\ K_i^{m}\setminus K_i^{n}.
\end{aligned}
\end{equation}
\end{definition}

We start to prove \cref{lem:ab lem 1,lem:ab lem 2,lem:ab lem 3} now.
\begin{proof}[Proof of \cref{lem:ab lem 1}]
Replacing $v$ with $\Brackets{1-\chi_i^{m-1,m}} \mathcal{P}_i t_i$ in \cref{eq:ab glob varia}, recalling $1-\chi_i^{m-1,m}\equiv 0$ in $K_i^{m-1}$ and $1-\chi_i^{m-1,m}\equiv 1$ in $\Omega\setminus K_i^{m}$, we then obtain
\[
\begin{aligned}
&\Norm{\mathcal{P}_i t_i}_{a\Brackets{\Omega \setminus K_i^m}}^2+\Norm{\pi \mathcal{P}_i t_i}_{s\Brackets{\Omega\setminus K_i^m}}^2 \\
=&\int_{K_i^m\setminus K_i^{m-1}}\Brackets{\chi_i^{m-1,m}-1}\Matrix{A}\nabla \mathcal{P}_i t_i\cdot \nabla \mathcal{P}_i t_i\dx x\\
&\quad+\int_{K_i^m\setminus K_i^{m-1}} \mathcal{P}_i t_i \Matrix{A}\nabla \mathcal{P}_i t_i \cdot \nabla \chi_i^{m-1,m}\dx x\\
&\quad+\int_{K_i^m\setminus K_i^{m-1}} \tilde{\kappa} \pi \mathcal{P}_i t_i\cdot \pi \SquareBrackets{\Brackets{\chi_i^{m-1,m}-1}\mathcal{P}_i t_i} \dx x\\
\coloneqq& I_1+I_2+I_3.
\end{aligned}
\]
According to \cref{def:cutoff}, we have $\chi_i^{m-1,m}-1\leq 0$ in $K_i^m\setminus K_i^{m-1}$, which gives $I_1\leq 0$. By the definition of $\tilde{\kappa}$ in \cref{eq:kappa}, it is easy to show
\[
\Matrix{A}(x)\nabla \chi_i^{m-1,m}\cdot \nabla \chi_i^{m-1,m} \leq \tilde{\kappa}(x)
\]
in $K_i^{m}\setminus K_i^{m-1}$. Then, we could derive
\[
I_2 \leq \Norm{\mathcal{P}_i t_i}_{a\Brackets{K_i^{m}\setminus K_i^{m-1}}}\Norm{\mathcal{P}_i t_i}_{s\Brackets{K_i^{m}\setminus K_i^{m-1}}}.
\]
For $I_3$, applying the Cauchy–Schwarz inequality and estimates \eqref{eq:elem esti}, we have
\[
\begin{aligned}
I_3 &\leq \Norm{\pi \mathcal{P}_i t_i}_{s\Brackets{K_i^{m}\setminus K_i^{m-1}}} \Norm{\pi \SquareBrackets{\Brackets{\chi_i^{m-1,m}-1}\mathcal{P}_i t_i}}_{s\Brackets{K_i^{m}\setminus K_i^{m-1}}} \\
&\leq \Norm{\pi \mathcal{P}_i t_i}_{s\Brackets{K_i^{m}\setminus K_i^{m-1}}} \Norm{\Brackets{\chi_i^{m-1,m}-1}\mathcal{P}_i t_i}_{s\Brackets{K_i^{m}\setminus K_i^{m-1}}} \\
&\leq \Norm{\pi \mathcal{P}_i t_i}_{s\Brackets{K_i^{m}\setminus K_i^{m-1}}} \Norm{\mathcal{P}_i t_i}_{s\Brackets{K_i^{m}\setminus K_i^{m-1}}}
\end{aligned}
\]
Meanwhile, \eqref{eq:elem esti} provide an estimate for $\Norm{\mathcal{P}_i t_i}_{s\Brackets{K_i^{m}\setminus K_i^{m-1}}}$ as
\[
\begin{aligned}
\Norm{\mathcal{P}_i t_i}_{s\Brackets{K_i^{m}\setminus K_i^{m-1}}}\leq & \Norm{\mathcal{P}_i t_i-\pi \mathcal{P}_i t_i}_{s\Brackets{K_i^{m}\setminus K_i^{m-1}}}+\Norm{\pi \mathcal{P}_i t_i}_{s\Brackets{K_i^{m}\setminus K_i^{m-1}}} \\
\leq & \frac{1}{\sqrt{\Lambda}} \Norm{\mathcal{P}_i t_i}_{a\Brackets{K_i^{m}\setminus K_i^{m-1}}}+\Norm{\pi \mathcal{P}_i t_i}_{s\Brackets{K_i^{m}\setminus K_i^{m-1}}}.
\end{aligned}
\]
Collecting all the estimates for $I_1$, $I_2$ and $I_3$, we arrive at
\[
\begin{aligned}
&\Norm{\mathcal{P}_i t_i}_{a\Brackets{\Omega \setminus K_i^m}}^2+\Norm{\pi \mathcal{P}_i t_i}_{s\Brackets{\Omega\setminus K_i^m}}^2 \\
\leq & \Brackets{\Norm{\mathcal{P}_i t_i}_{a\Brackets{K_i^{m}\setminus K_i^{m-1}}}+\Norm{\pi \mathcal{P}_i t_i}_{s\Brackets{K_i^{m}\setminus K_i^{m-1}}}}\Brackets{\frac{1}{\sqrt{\Lambda}} \Norm{\mathcal{P}_i t_i}_{a\Brackets{K_i^{m}\setminus K_i^{m-1}}}+\Norm{\pi \mathcal{P}_i t_i}_{s\Brackets{K_i^{m}\setminus K_i^{m-1}}}} \\
\leq & c_{*}(\Lambda)\Brackets{\Norm{\mathcal{P}_i t_i}_{a\Brackets{K_i^{m}\setminus K_i^{m-1}}}^2+\Norm{\pi \mathcal{P}_i t_i}_{s\Brackets{K_i^{m}\setminus K_i^{m-1}}}^2}.
\end{aligned}
\]
This yields an iterative relation
\[
\begin{aligned}
&\Norm{\mathcal{P}_i t_i}_{a\Brackets{\Omega \setminus K_i^{m-1}}}^2+\Norm{\pi \mathcal{P}_i t_i}_{s\Brackets{\Omega\setminus K_i^{m-1}}}^2\\
\geq&\Brackets{1+\frac{1}{c_{*}}} \Norm{\mathcal{P}_i t_i}_{a\Brackets{\Omega \setminus K_i^{m}}}^2+\Norm{\pi \mathcal{P}_i t_i}_{s\Brackets{\Omega\setminus K_i^{m}}}^2,
\end{aligned}
\]
and also finishes the proof.
\end{proof}

\begin{proof}[Proof of \cref{lem:ab lem 2}]
Let $z_i\coloneqq \Brackets{\mathcal{P}_i-\mathcal{P}_i^m}t_i$, and decompose $z_i$ as
\[
z_i=\Braces{\Brackets{1-\chi_i^{m-1,m}}\mathcal{P}_i t_i}+\Braces{\Brackets{\chi_i^{m-1,m}-1}\mathcal{P}_i^m t_i + \chi_i^{m-1,m}z_i}\coloneqq z_i'+z_i''.
\]
Recalling the definition of $\chi_i^{m-1,m}$, we have $z_i''\in V_i^{m}$. Then combining \cref{eq:ab glob varia,eq:ab loca varia}, we can obtain
\[
a(z_i,z_i'')+s(\pi z_i,\pi z_i'')=0.
\]
Techniques for estimating $a(z_i,z_i')+s(\pi z_i,\pi z_i'')$ are basically same as the proof of \cref{lem:ab lem 1}:
\[
\Norm{z_i}_a^2+\Norm{\pi z_i}_s^2=a(z_i,z_i')+s(\pi z_i,\pi z_i') \leq \Norm{z_i}_a \Norm{z_i'}_a+\Norm{\pi z_i}_s\Norm{\pi z_i'}_s;
\]
\[
\begin{aligned}
\Norm{z_i'}_a =& \Norm{\Brackets{1-\chi_i^{m-1,m}}\mathcal{P}_i t_i}_a \leq \Norm{\mathcal{P}_i t_i}_{a(\Omega\setminus K_i^{m-1})}+\Norm{\mathcal{P}_i t_i}_{s(\Omega\setminus K_i^{m-1})} \\
\leq&\Brackets{1+\frac{1}{\sqrt{\Lambda}}}\Norm{\mathcal{P}_i t_i}_{a(\Omega\setminus K_i^{m-1})}+\Norm{\pi \mathcal{P}_i t_i}_{s(\Omega\setminus K_i^{m-1})};
\end{aligned}
\]
\[
\begin{aligned}
\Norm{\pi z_i'}_s&=\Norm{\pi\SquareBrackets{\Brackets{1-\chi_i^{m-1,m}}\mathcal{P}_i t_i}}_s\leq \Norm{\Brackets{1-\chi_i^{m-1,m}}\mathcal{P}_i t_i}_s \leq \Norm{\mathcal{P}_i t_i}_{s(\Omega \setminus K_i^{m-1})}  \\
&\leq \frac{1}{\sqrt{\Lambda}}\Norm{\mathcal{P}_i t_i}_{a(\Omega\setminus K_i^{m-1})}+\Norm{\pi \mathcal{P}_i t_i}_{s(\Omega\setminus K_i^{m-1})}
\end{aligned}
\]
We then obtain
\[
\begin{aligned}
&\Norm{z_i}_a^2+\Norm{\pi z_i}_s^2=a(z_i,z_i')+s(\pi z_i,\pi z_i')\\
\leq& \Braces{ c_{\star}\Brackets{\Norm{\mathcal{P}_i t_i}_{a(\Omega\setminus K_i^{m-1})}^2+\Norm{\pi \mathcal{P}_i t_i}_{s(\Omega\setminus K_i^{m-1})}^2}}^{1/2}\Braces{\Norm{z_i}_a^2+\Norm{\pi z_i}_s^2}^{1/2}.
\end{aligned}
\]
Combining the result in \cref{lem:ab lem 1}, we hence complete this proof.
\end{proof}

\begin{proof}[Proof of \cref{lem:ab lem 3}]
Still take $z_i\coloneqq \Brackets{\mathcal{P}_i-\mathcal{P}_i^m}t_i$ and $z=\sum_{i=1}^N z_i$, and decompose $z$ as
\[
z=\Braces{\Brackets{1-\chi_i^{m,m+1}}z}+\Braces{\chi_i^{m,m+1}z}\coloneqq z'+z''.
\]
Noting that $\Supp\Brackets{z'} \subset \Omega\setminus K_i^{m}$, $\Supp \Brackets{\pi z'} \subset \Omega\setminus K_i^{m}$, $\Supp\Brackets{\mathcal{P}_i^m t_i} \subset \Cl\Brackets{K_i^m}$ and $\Supp\Brackets{\pi \mathcal{P}_i^m t_i} \subset \Cl\Brackets{K_i^m}$, we have
\[
a(\mathcal{P}_i^m t_i, z')+s(\pi\mathcal{P}_i^m t_i, \pi z')=0
\]
and
\[
a(\mathcal{P}_i t_i, z')+s(\pi\mathcal{P}_i t_i, \pi z')=\FuncAction{t_i}{z'}=0,
\]
which leads to
\[
a(z_i,z')+s(\pi z_i, \pi z')=0.
\]
Use similar techniques in the proof of \cref{lem:ab lem 2}:
\[
a(z_i, z)+s(\pi z_i, \pi z)=a(z_i, z'')+s(\pi z_i, \pi z'') \leq \Norm{z_i}_a\Norm{z''}_a+\Norm{\pi z_i}_s\Norm{\pi z''}_s;
\]
\[
\begin{aligned}
\Norm{z''}_a&=\Norm{\chi_i^{m,m+1}z}_a\leq \Norm{z}_{a\Brackets{K_i^{m+1}}}+\Norm{z}_{s\Brackets{K_i^{m+1}}}\\
&\leq \Brackets{1+\frac{1}{\sqrt{\Lambda}}}\Norm{z}_{a\Brackets{K_i^{m+1}}}+\Norm{\pi z}_{s\Brackets{K_i^{m+1}}};
\end{aligned}
\]
\[
\begin{aligned}
\Norm{\pi z''}_s &= \Norm{\pi \Brackets{\chi_i^{m,m+1}z}}_s \leq \Norm{\chi_i^{m,m+1}z}_s \leq \Norm{z}_{K_i^{m+1}} \\
&\leq \frac{1}{\sqrt{\Lambda}}\Norm{z}_{a\Brackets{K_i^{m+1}}}+\Norm{\pi z}_{s\Brackets{K_i^{m+1}}}.
\end{aligned}
\]
We hence obtain
\[
a(z_i,z)+s(\pi z_i, \pi z)\leq \Braces{c_\star\Brackets{\Norm{z}_{a\Brackets{K_i^{m+1}}}^2+\Norm{\pi z}_{s\Brackets{K_i^{m+1}}}^2}}^{1/2}\Braces{\Norm{z_i}_a^2+\Norm{\pi z_i}_s^2}^{1/2}.
\]
Recalling the definition of $C_\textnormal{ol}$, it is easy to show
\[
\sum_{i=1}^N \Norm{z}_{a\Brackets{K_i^{m+1}}}^2+\Norm{\pi z}_{s\Brackets{K_i^{m+1}}}^2\leq C_\textnormal{ol}\Brackets{m+1}^d\Brackets{\Norm{z}_a^2+\Norm{\pi z}_s^2}.
\]
Then by the Cauchy–Schwarz inequality, we get
\[
\begin{aligned}
\Norm{z}_a^2+\Norm{\pi z}_s^2&=\sum_{i=1}^N a(z_i,z)+s(\pi z_i,\pi z) \\
&\leq \Braces{c_\star C_\textnormal{ol} \Brackets{\Norm{z}_a^2+\Norm{\pi z}_s^2}}^{1/2}\Braces{\sum_{i=1}^N \Norm{z_i}_a^2+\Norm{\pi z_i}_s^2}^{1/2}.
\end{aligned}
\]
Combining \cref{lem:ab lem 2} and
\[
\Norm{\mathcal{P}_i t_i}_a^2+\Norm{\pi \mathcal{P}_i t_i}_s^2=\FuncAction{t_i}{\mathcal{P}_i t_i}
\]
prove the lemma.
\end{proof}

A direct result of \cref{lem:ab lem 3} is the following corollary, which presents estimates of $\Brackets{\mathcal{D}^\textnormal{glo}-\mathcal{D}^m}\tilde{g}$ and $\Brackets{\mathcal{N}^\textnormal{glo}-\mathcal{N}^m}q$.
\begin{corollary}\label{cor:diri numn esti}
Let notations be the same as \cref{lem:ab lem 1,lem:ab lem 2,lem:ab lem 3}. Then
\begin{equation}
\Norm{\Brackets{\mathcal{D}^\textnormal{glo}-\mathcal{D}^m)\tilde{g}}}_a^2+\Norm{\pi\Brackets{\mathcal{D}^\textnormal{glo}-\mathcal{D}^m)\tilde{g}}}_s^2\leq c_\star^2 C_\textnormal{ol}\theta^{m-1}\Brackets{m+1}^d\Norm{\tilde{g}}_a^2;
\end{equation}
and
\begin{equation}
\Norm{\Brackets{\mathcal{N}^\textnormal{glo}-\mathcal{N}^m)q}}_a^2+\Norm{\pi\Brackets{\mathcal{N}^\textnormal{glo}-\mathcal{N}^m)q}}_s^2\leq c_\star^2 C_\textnormal{ol}C_\textnormal{tr}^2\theta^{m-1}\Brackets{m+1}^d\Norm{q}^2_{L^2(\GammaN)},
\end{equation}
where $C_\textnormal{tr}$ is the modified norm of the trace operator $V\rightarrow L^2(\GammaN)$
\[
C_\textnormal{tr}\coloneqq \sup_{\substack{v\in V\\v\neq 0}} \frac{\Norm{v}_{L^2(\GammaN)}}{\Norm{v}_{a}}.
\]
\end{corollary}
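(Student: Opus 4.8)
The plan is to recognize both inequalities as instances of the abstract problem and to invoke \cref{lem:ab lem 3} essentially verbatim; the only genuine work is producing the right-hand side bounds $\Norm{\tilde{g}}_a^2$ and $C_\textnormal{tr}^2\Norm{q}^2_{L^2(\GammaN)}$ out of the quantity $\sum_{i=1}^N \FuncAction{t_i}{\mathcal{P}_i t_i}$ that \cref{lem:ab lem 3} delivers.

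For the Dirichlet bound I would take $\FuncAction{t_i}{v}=\int_{K_i}\Matrix{A}\nabla\tilde{g}\cdot\nabla v\dx x$. Matching \cref{eq:glob diri,eq:local diri} against \cref{eq:ab glob varia,eq:ab loca varia} identifies $\mathcal{P}_i t_i=\mathcal{D}^\textnormal{glo}_i\tilde{g}$ and $\mathcal{P}^m_i t_i=\mathcal{D}^m_i\tilde{g}$. First I would verify admissibility: if $\Supp(v)\subset\Omega\setminus K_i$ then $\nabla v=0$ a.e.\ on $K_i$, so $\FuncAction{t_i}{v}=0$ and $t_i$ is a legitimate functional for the framework. \cref{lem:ab lem 3} then yields the estimate with $\sum_i \FuncAction{t_i}{\mathcal{D}^\textnormal{glo}_i\tilde{g}}$ on the right, and it remains to control this sum. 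Testing \cref{eq:ab glob varia} with $v=\mathcal{P}_it_i$ gives the identity $\FuncAction{t_i}{\mathcal{P}_it_i}=\Norm{\mathcal{P}_it_i}_a^2+\Norm{\pi\mathcal{P}_it_i}_s^2$ (already used at the end of the proof of \cref{lem:ab lem 3}), so each summand is nonnegative and dominates $\Norm{\mathcal{D}^\textnormal{glo}_i\tilde{g}}_a^2$. Writing $X_i\coloneqq\FuncAction{t_i}{\mathcal{D}^\textnormal{glo}_i\tilde{g}}$ and applying the Cauchy–Schwarz inequality to the defining integral,
\[
X_i=\int_{K_i}\Matrix{A}\nabla\tilde{g}\cdot\nabla\mathcal{D}^\textnormal{glo}_i\tilde{g}\dx x\leq \Norm{\tilde{g}}_{a(K_i)}\Norm{\mathcal{D}^\textnormal{glo}_i\tilde{g}}_{a(K_i)}\leq \Norm{\tilde{g}}_{a(K_i)}\sqrt{X_i},
\]
a self-bounding inequality giving $X_i\leq\Norm{\tilde{g}}_{a(K_i)}^2$. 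Since the elements $K_i$ tile $\Omega$, summing produces $\sum_i X_i\leq\Norm{\tilde{g}}_a^2$, which is exactly the first claim.

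The Neumann bound is parallel, with $\FuncAction{t_i}{v}=\int_{\partial K_i\cap\GammaN}qv\dx\sigma$ and $\mathcal{P}_it_i=\mathcal{N}^\textnormal{glo}_iq$. Admissibility now follows because the trace of $v$ on $\partial K_i$ vanishes whenever $v$ is supported away from $K_i$. Running the same self-bounding argument, but with Cauchy–Schwarz in $L^2(\partial K_i\cap\GammaN)$ and the modified trace bound $\Norm{\mathcal{N}^\textnormal{glo}_iq}_{L^2(\GammaN)}\leq C_\textnormal{tr}\Norm{\mathcal{N}^\textnormal{glo}_iq}_a$, gives $\FuncAction{t_i}{\mathcal{N}^\textnormal{glo}_iq}\leq C_\textnormal{tr}^2\Norm{q}_{L^2(\partial K_i\cap\GammaN)}^2$; summing over the essentially disjoint boundary pieces $\partial K_i\cap\GammaN$, whose union is $\GammaN$, yields $C_\textnormal{tr}^2\Norm{q}^2_{L^2(\GammaN)}$, completing the second claim.

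I expect the main obstacle to be not any single hard estimate but the bookkeeping at the two places where local quantities are reassembled into global ones: the additivity $\sum_i\Norm{\tilde{g}}_{a(K_i)}^2=\Norm{\tilde{g}}_a^2$ and $\sum_i\Norm{q}_{L^2(\partial K_i\cap\GammaN)}^2=\Norm{q}^2_{L^2(\GammaN)}$ must hold with no double counting, which relies on $\Braces{K_i}$ being a genuine partition and on each boundary face of $\GammaN$ belonging to a unique element. Coupled with correctly verifying the locality condition $\FuncAction{t_i}{v}=0$ for the Neumann functional via a trace argument, this is where the argument would break if the support hypothesis on $t_i$ were misidentified; everything else is a direct citation of \cref{lem:ab lem 3}.
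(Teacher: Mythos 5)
Your proposal is correct and follows essentially the same route as the paper: both reduce to \cref{lem:ab lem 3} and then bound $\sum_i\FuncAction{t_i}{\mathcal{P}_it_i}$ by testing \cref{eq:glob diri,eq:glob neum} with the solution itself, obtaining $\Norm{\mathcal{D}^\textnormal{glo}_i\tilde{g}}_a\leq\Norm{\tilde{g}}_{a(K_i)}$ and $\Norm{\mathcal{N}^\textnormal{glo}_iq}_a\leq C_\textnormal{tr}\Norm{q}_{L^2(\partial K_i\cap\GammaN)}$ before summing over the partition. Your self-bounding phrasing $X_i\leq\Norm{\tilde{g}}_{a(K_i)}\sqrt{X_i}$ is just a compressed form of the paper's two-step estimate, so no substantive difference.
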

\begin{proof}
According to \cref{lem:ab lem 3}, we are left to estimate
\[
\sum_{i=1}^N \int_{K_i} \Matrix{A}\nabla\tilde{g}\cdot \nabla \mathcal{D}^\textnormal{glo}_i \tilde{g} \dx x \quad\text{and}\quad \sum_{i=1}^N \int_{\partial K_i \cap \GammaN} q \mathcal{N}^\textnormal{glo}_i q \dx \sigma.
\]
Replacing $v$ with $\mathcal{D}^\textnormal{glo}_i \tilde{g}$ in \cref{eq:glob diri}, we have
\[
\Norm{\mathcal{D}^\textnormal{glo}_i\tilde{g}}_a^2 +\Norm{\pi \mathcal{D}^\textnormal{glo}_i\tilde{g}}_s^2 \leq \Norm{\tilde{g}}_{a(K_i)} \Norm{\mathcal{D}^\textnormal{glo}_i\tilde{g}}_{a(k_i)},
\]
which leads $\Norm{\mathcal{D}^\textnormal{glo}_i\tilde{g}}_{a(K_i)}\leq \Norm{\mathcal{D}^\textnormal{glo}_i\tilde{g}}_a\leq \Norm{\tilde{g}}_{a(K_i)}$ and
\[
\sum_{i=1}^N \int_{K_i} \Matrix{A}\nabla\tilde{g}\cdot \nabla \mathcal{D}^\textnormal{glo}_i \tilde{g} \dx x\leq \sum_{i=1}^N \Norm{\tilde{g}}_{a(K_i)}^2=\Norm{\tilde{g}}_a^2.
\]
Similarly, we can obtain $\Norm{\mathcal{N}^\textnormal{glo}_iq}_a\leq C_\textnormal{tr}\Norm{q}_{L^2\Brackets{\partial K_i \cap \GammaN}}$ from \cref{eq:glob neum} and
\[
\begin{aligned}
&\sum_{i=1}^N \int_{\partial K_i \cap \GammaN} q \mathcal{N}^\textnormal{glo}_i q \dx \sigma\leq \sum_{i=1}^N \Norm{q}_{L^2\Brackets{\partial K_i \cap \GammaN}} \Norm{\mathcal{N}^\textnormal{glo}_iq}_{L^2(\GammaN)} \\
\leq& \sum_{i=1}^N C_\textnormal{tr}\Norm{q}_{L^2\Brackets{\partial K_i \cap \GammaN}} \Norm{\mathcal{N}^\textnormal{glo}_iq}_{a}\leq \sum_{i=1}^N C_\textnormal{tr}^2 \Norm{q}^2_{L^2\Brackets{\partial K_i \cap \GammaN}}\\
=&C_\textnormal{tr} \Norm{q}_{L^2(\GammaN)}^2.
\end{aligned}
\]
\end{proof}

To systematically analyze function spaces $V^\textnormal{glo}_\textnormal{ms}$ and $V^m_\textnormal{ms}$, we also introduce another pair of operators $\mathcal{R}^\textnormal{glo}\coloneqq \sum_{i=1}^{N}\mathcal{R}^\textnormal{glo}_i:L^2(\Omega)\rightarrow V^\textnormal{glo}_\textnormal{ms}$ and $\mathcal{R}^m\coloneqq \sum_{i=1}^N \mathcal{R}_i^m: L^2(\Omega)\rightarrow V^m_\textnormal{ms}$, where $\mathcal{R}^\textnormal{glo}_i$ and $\mathcal{R}_i^m$ are defined as follows:
\begin{equation} \label{eq:glob R}
a(\mathcal{R}^\textnormal{glo}_i \varphi, v)+s(\pi\mathcal{R}^\textnormal{glo}_i \varphi, \pi v)=s(\pi_i \varphi, \pi v)\quad \forall v \in V;
\end{equation}
\begin{equation}\label{eq:local R}
a(\mathcal{R}_i^m \varphi, v)+s(\pi\mathcal{R}_i^m \varphi, \pi v)=s(\pi_i \varphi, \pi v)\quad \forall v \in V_i^m.
\end{equation}

The next lemma could be viewed as an inverse inequality of $V^\textnormal{aux}$ to $V$.
\begin{lemma}[\cite{Chung2018}] \label{lem:inverse}
There exists a positive constant $C_\textnormal{inv}$ such that for any $v\in L^2(\Omega)$, there exists $\hat{v}\in V$ with $\pi \hat{v}=\pi v$ and $\Norm{\hat{v}}_a\leq C_\textnormal{inv}\Norm{\pi v}_s$.
\end{lemma}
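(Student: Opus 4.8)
The plan is to read this statement as the assertion that the constraint operator $\pi$ admits a bounded right inverse from $V^\textnormal{aux}$ into $V$, the boundedness being measured in $\Norm{\cdot}_a$ against $\Norm{\cdot}_s$; equivalently, it is an inf--sup (LBB) condition for the pairing $s(\cdot,\cdot)$ between $\Brackets{V,\Norm{\cdot}_a}$ and $\Brackets{V^\textnormal{aux},\Norm{\cdot}_s}$. I would first decouple the global statement into independent local ones over the elements $K_i$, then construct the local right inverse explicitly, and finally verify its boundedness uniformly in $i$ and in the contrast $A_2/A_1$.

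First I would localize. Since $V^\textnormal{aux}=\oplus_{i=1}^N V_i^\textnormal{aux}$ and each $\pi_i v$ is supported in $K_i$, the supports are disjoint up to interfaces and $\Norm{\pi v}_s^2=\sum_{i=1}^N\Norm{\pi_i v}_{s(K_i)}^2$. Hence it suffices to produce, for every $i$, a function $\hat v_i\in H^1_0(K_i)$ with $\pi_i\hat v_i=\pi_i v$ and $\Norm{\hat v_i}_{a(K_i)}\le C_\textnormal{inv}\Norm{\pi_i v}_{s(K_i)}$. The zero-extension of such $\hat v_i$ lies in $V$ (it vanishes on $\partial K_i$, hence on $\GammaD$), the supports $\Cl(K_i)$ meet only along element interfaces, so $\pi_j\hat v_i=0$ for $j\ne i$ and the cross terms $a(\hat v_i,\hat v_j)$ vanish for $i\ne j$. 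Setting $\hat v=\sum_{i=1}^N\hat v_i$ then gives $\pi\hat v=\pi v$ together with $\Norm{\hat v}_a^2=\sum_{i=1}^N\Norm{\hat v_i}_{a(K_i)}^2\le C_\textnormal{inv}^2\Norm{\pi v}_s^2$, which is the claim.

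For the local problem I would take $\hat v_i$ to be the minimal-energy preimage,
\[
\hat v_i=\Argmin\Braces{\Norm{w}_{a(K_i)}:w\in H^1_0(K_i),\ \pi_i w=\pi_i v}.
\]
Feasibility is immediate: $\pi_i$ maps $L^2(K_i)$ onto the finite-dimensional space $V_i^\textnormal{aux}$, $H^1_0(K_i)$ is dense in $L^2(K_i)$, and $\pi_i$ is $L^2$-continuous, so $\pi_i\Brackets{H^1_0(K_i)}$ is a dense subspace of $V_i^\textnormal{aux}$ and therefore all of it. Standard saddle-point theory then yields $\Norm{\hat v_i}_{a(K_i)}\le\beta_i^{-1}\Norm{\pi_i v}_{s(K_i)}$, where
\[
\beta_i\coloneqq \inf_{\substack{\mu\in V_i^\textnormal{aux}\\\mu\ne 0}}\ \sup_{\substack{w\in H^1_0(K_i)\\ w\ne 0}}\frac{s(\mu,w)}{\Norm{\mu}_{s(K_i)}\Norm{w}_{a(K_i)}},
\]
so everything reduces to a lower bound $\beta_i\ge c>0$ with $c$ independent of $i$ and of $A_2/A_1$; one then sets $C_\textnormal{inv}=c^{-1}$.

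The main obstacle is precisely this uniform lower bound on $\beta_i$, and it is where the normalization of $\tilde\kappa$ in \eqref{eq:kappa} is used. Writing $\mu=\sum_{j=0}^{l_i}c_j\phi_i^j$ in the $s$-orthonormal eigenbasis, I would build an admissible test function $w\in H^1_0(K_i)$ realizing the supremum by correcting $\mu$ near $\partial K_i$---e.g.\ subtracting the $a$-harmonic extension of its trace, or multiplying by a bubble assembled from the Lagrange functions $\eta_i^k$---and then estimate $s(\mu,w)$ from below and $\Norm{w}_{a(K_i)}$ from above. The contrast enters only through $a$, and it is controlled by the pointwise bound $\Matrix{A}\nabla\eta_i^k\cdot\nabla\eta_i^k\le\tilde\kappa$ (a consequence of \eqref{eq:kappa} and $\sum_k\eta_i^k\equiv1$), which balances the energy of the correction against the $s$-weight; the estimate \eqref{eq:elem esti} relating $\Norm{\cdot}_{s(K_i)}$ and $\Norm{\cdot}_{a(K_i)}$ on $V_i^\textnormal{aux}$ supplies the remaining control. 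A scaling to a reference element, using shape-regularity of $\mathcal{T}^H$, removes the dependence on $i$ and on $H$, leaving a constant $c$ that depends only on the reference geometry and on $\Lambda$, hence not on the contrast.
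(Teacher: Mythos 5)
Your argument matches the paper's: the paper defers this lemma to \cite{Chung2018} and, in the accompanying remark, records exactly your localized saddle-point/inf--sup formulation, with the inf--sup bound obtained from the bubble-function test $w=\mathrm{B}\mu$, $\mathrm{B}=\prod_{j=1}^{N_\textnormal{v}}\eta_i^j$, giving $C_\textnormal{inv}\leq 2\sup_i C_{\pi_i}\Brackets{\lambda_i^{l_i}+\frac{N_\textnormal{v}}{N_\textnormal{v}-1}}$. One caveat: your closing claim that the constant is independent of the contrast $A_2/A_1$ is stronger than what this construction yields or the lemma asserts --- both $C_{\pi_i}$ and $\lambda_i^{l_i}$ depend on the coefficient, and the paper later treats $C_\textnormal{inv}=O(1)$ explicitly as an \emph{assumption}; the lemma itself only needs some positive constant, which your finite-dimensionality and density argument does deliver.
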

\begin{remark}
The existence of $C_\textnormal{inv}$ can be proven by the well-posedness of the corresponding saddle point problem \cite{Brezzi1991}. Moreover, as shown in \cite{Chung2018}, it is possible to provide a computable priori estimate of $C_\textnormal{inv}$ as
\[
C_\textnormal{inv} \leq 2\sup_{K_i\in\mathcal{T}^H}C_{\pi_i}\Brackets{\lambda_i^{l_i}+\frac{N_\textnormal{v}}{N_\textnormal{v}-1}},
\]
where
\[
C_{\pi_i}=\sup_{\substack{\mu\in V_i^\textnormal{aux}\\ \mu\neq 0}}\frac{\int_{K_i} \tilde{\kappa} \mu^2 \dx x}{\int_{K_i} \mathrm{B}(x) \tilde{\kappa} \mu^2 \dx x},
\]
and $\mathrm{B}(x) \in H^1_0(K_i)$ is a bubble function which can be expressed as the product of Lagrange bases on $K_i$ (i.e., $\mathrm{B}=\prod_{j=1}^{N_\textnormal{v}} \eta_i^j$).
\end{remark}

The main result of this subsection is the following theorem:
\begin{theorem}\label{thm:main}
Let $\mathcal{D}^m\tilde{g}$, $\mathcal{N}^m q$ and $w^m$ be the numerical solutions obtained in \textbf{Step1-4}, $w^\textnormal{glo}$ be the solution of \cref{eq:glob w}, and $\Lambda$, $\theta$, $c_\star$, $C_\textnormal{tr}$ and $C_\textnormal{inv}$ be the constants defined in \cref{thm:glob error}, \cref{lem:ab lem 1}, \cref{lem:ab lem 2}, \cref{cor:diri numn esti} and \cref{lem:inverse} respectively. Then
\begin{equation}
\begin{aligned}
&\Norm{w^m-\mathcal{D}^mg+\mathcal{N}^mq+\tilde{g}-u}_a \leq \frac{1}{\sqrt{\Lambda}} \Norm{f}_{s^{-1}}\\
&\qquad +c_\star \sqrt{C_\textnormal{ol}}\theta^{\frac{m-1}{2}}\Brackets{m+1}^{\frac{d}{2}}\Braces{\Norm{\tilde{g}}_a+C_\textnormal{tr}\Norm{q}_{L^2(\GammaN)}+C_\textnormal{inv}\Norm{w^\textnormal{glo}}_a+\Norm{\pi w^\textnormal{glo}}_s}.
\end{aligned}
\end{equation}
\end{theorem}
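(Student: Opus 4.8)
The plan is to combine a Galerkin orthogonality enjoyed by the total error with the abstract decay estimate \cref{lem:ab lem 3} applied to the operators $\mathcal{R}^\textnormal{glo}$ and $\mathcal{R}^m$. Write $e\coloneqq w^m-\mathcal{D}^m\tilde{g}+\mathcal{N}^m q+\tilde{g}-u$ for the quantity to be estimated; since $u=u_0+\tilde{g}$ we have $e=w^m-\mathcal{D}^m\tilde{g}+\mathcal{N}^m q-u_0$. First I would observe that subtracting \cref{eq:model problem varia} from \cref{eq:main solver} cancels the data terms $\int_\Omega fv$, $\int_\Omega\Matrix{A}\nabla\tilde{g}\cdot\nabla v$ and $\int_\GammaN qv$, leaving $a(e,v)=0$ for every $v\in V^m_\textnormal{ms}$ (recall $V^m_\textnormal{ms}\subset V$ through zero-extension). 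Consequently $\Norm{e}_a\leq\Norm{e-\zeta}_a$ for any $\zeta\in V^m_\textnormal{ms}$, so the task reduces to producing one convenient $\zeta$.

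To choose $\zeta$, I would introduce the auxiliary datum $\varphi^\textnormal{glo}\in V^\textnormal{aux}$ characterised by $\mathcal{R}^\textnormal{glo}\varphi^\textnormal{glo}=w^\textnormal{glo}$; such a $\varphi^\textnormal{glo}$ exists and is unique because $\mathcal{R}^\textnormal{glo}$ restricts to a bijection from $V^\textnormal{aux}$ onto $V^\textnormal{glo}_\textnormal{ms}$ (summing \cref{eq:glob R} over $i$ gives $a(\mathcal{R}^\textnormal{glo}\varphi,v)+s(\pi\mathcal{R}^\textnormal{glo}\varphi,\pi v)=s(\varphi,\pi v)$ for all $v\in V$, and by \cref{lem:basic orth} the map $\pi$ is injective on $V^\textnormal{glo}_\textnormal{ms}$). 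Taking $\zeta=w^m-\mathcal{R}^m\varphi^\textnormal{glo}\in V^m_\textnormal{ms}$ and substituting $u_0=w^\textnormal{glo}-\mathcal{D}^\textnormal{glo}\tilde{g}+\mathcal{N}^\textnormal{glo}q-e^\textnormal{glo}$, where $e^\textnormal{glo}\coloneqq w^\textnormal{glo}-\mathcal{D}^\textnormal{glo}\tilde{g}+\mathcal{N}^\textnormal{glo}q+\tilde{g}-u$ is the global error of \cref{thm:glob error}, I obtain the four-term splitting
\[
e-\zeta=-\Brackets{\mathcal{R}^\textnormal{glo}-\mathcal{R}^m}\varphi^\textnormal{glo}-\Brackets{\mathcal{D}^m-\mathcal{D}^\textnormal{glo}}\tilde{g}+\Brackets{\mathcal{N}^m-\mathcal{N}^\textnormal{glo}}q+e^\textnormal{glo}.
\]
The triangle inequality then bounds $\Norm{e}_a$ by the sum of the four corresponding norms: $\Norm{e^\textnormal{glo}}_a$ is controlled by \cref{thm:glob error}, while $\Norm{\Brackets{\mathcal{D}^\textnormal{glo}-\mathcal{D}^m}\tilde{g}}_a$ and $\Norm{\Brackets{\mathcal{N}^\textnormal{glo}-\mathcal{N}^m}q}_a$ are exactly what \cref{cor:diri numn esti} estimates, contributing the $\Norm{\tilde{g}}_a$ and $C_\textnormal{tr}\Norm{q}_{L^2(\GammaN)}$ terms.

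The remaining and most delicate term is $\Norm{\Brackets{\mathcal{R}^\textnormal{glo}-\mathcal{R}^m}\varphi^\textnormal{glo}}_a$. Here I would invoke \cref{lem:ab lem 3} with the functionals $t_i$ defined by $v\mapsto s(\pi_i\varphi^\textnormal{glo},\pi v)$; these satisfy the locality hypothesis of the abstract problem since $\pi_i\varphi^\textnormal{glo}$ is supported in $K_i$ (so $\FuncAction{t_i}{v}=s(\pi_i\varphi^\textnormal{glo},\pi_i v)=0$ whenever $v$ vanishes on $K_i$), and the associated operators are precisely $\mathcal{R}^\textnormal{glo}_i$ and $\mathcal{R}^m_i$ by \cref{eq:glob R,eq:local R}. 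To bound the right-hand side $\sum_i\FuncAction{t_i}{\mathcal{R}^\textnormal{glo}_i\varphi^\textnormal{glo}}$, I would use the identity $\FuncAction{t_i}{\mathcal{R}^\textnormal{glo}_i\varphi^\textnormal{glo}}=\Norm{\mathcal{R}^\textnormal{glo}_i\varphi^\textnormal{glo}}_a^2+\Norm{\pi\mathcal{R}^\textnormal{glo}_i\varphi^\textnormal{glo}}_s^2=s(\pi_i\varphi^\textnormal{glo},\pi_i\mathcal{R}^\textnormal{glo}_i\varphi^\textnormal{glo})$ together with a Cauchy--Schwarz step to get $\FuncAction{t_i}{\mathcal{R}^\textnormal{glo}_i\varphi^\textnormal{glo}}\leq\Norm{\pi_i\varphi^\textnormal{glo}}_s^2$, whence $\sum_i\FuncAction{t_i}{\mathcal{R}^\textnormal{glo}_i\varphi^\textnormal{glo}}\leq\Norm{\varphi^\textnormal{glo}}_s^2$ by the disjoint supports of the auxiliary components.

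Finally I would convert $\Norm{\varphi^\textnormal{glo}}_s$ into the quantities in the statement. Testing the global identity $a(w^\textnormal{glo},v)+s(\pi w^\textnormal{glo},\pi v)=s(\varphi^\textnormal{glo},\pi v)$ against the function $\hat{v}\in V$ supplied by \cref{lem:inverse} applied to $\varphi^\textnormal{glo}$, for which $\pi\hat{v}=\pi\varphi^\textnormal{glo}=\varphi^\textnormal{glo}$ and $\Norm{\hat{v}}_a\leq C_\textnormal{inv}\Norm{\varphi^\textnormal{glo}}_s$, the right-hand side becomes $\Norm{\varphi^\textnormal{glo}}_s^2$, and a Cauchy--Schwarz estimate of the left-hand side yields $\Norm{\varphi^\textnormal{glo}}_s\leq C_\textnormal{inv}\Norm{w^\textnormal{glo}}_a+\Norm{\pi w^\textnormal{glo}}_s$. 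Feeding this into the decay factor $c_\star^2 C_\textnormal{ol}\theta^{m-1}\Brackets{m+1}^d$ of \cref{lem:ab lem 3} reproduces the $\mathcal{R}$-contribution, and adding the four terms gives the claimed bound. I expect the main obstacle to be the conceptual step in the first two paragraphs: because $w^m$ is only an $a$-Galerkin solution whereas $\mathcal{D}^m,\mathcal{N}^m,\mathcal{R}^m$ are defined through the augmented form $a(\cdot,\cdot)+s(\pi\cdot,\pi\cdot)$, one cannot hope for $w^m=\mathcal{R}^m\varphi^\textnormal{glo}$; the orthogonality $a(e,v)=0$ on $V^m_\textnormal{ms}$ is precisely what allows this mismatch to be bypassed by selecting $\zeta$ freely instead of identifying $w^m$ exactly.
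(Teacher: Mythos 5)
Your proof is correct and follows essentially the same route as the paper's: Galerkin quasi-optimality, the four-term splitting via the global operators, \cref{cor:diri numn esti} for the $\mathcal{D}$ and $\mathcal{N}$ contributions, and \cref{lem:ab lem 3} together with \cref{lem:inverse} applied to $\Brackets{\mathcal{R}^\textnormal{glo}-\mathcal{R}^m}\varphi$ for the remaining term. The only differences are cosmetic: you choose the preimage $\varphi^\textnormal{glo}$ in $V^\textnormal{aux}$ rather than in $L^2(\Omega)$, and you spell out the locality check for $t_i\colon v\mapsto s(\pi_i\varphi^\textnormal{glo},\pi v)$ and the bound $\sum_i\FuncAction{t_i}{\mathcal{R}^\textnormal{glo}_i\varphi^\textnormal{glo}}\leq\Norm{\varphi^\textnormal{glo}}_s^2$, which the paper compresses into ``similar techniques in proving \cref{cor:diri numn esti}''.
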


\begin{proof}
Take $e^m=w^m-\mathcal{D}^mg+\mathcal{N}^mq+\tilde{g}-u$. According to variational forms \cref{eq:model problem varia,eq:main solver}, we have $a(e^m, v)=0$ for all $v\in V^m_\textnormal{ms}$. Then for a $w^m_*\in V^m_\textnormal{ms}$ which will be chosen later, by the Galerkin orthogonality, it is easy to show
\[
\Norm{e^m}_a^2=\Norm{e^m-w^m+w^m_*}_a^2-\Norm{w^m-w^m_*}_a^2\leq \Norm{w^m_*-\mathcal{D}^mg+\mathcal{N}^mq+\tilde{g}-u}_a^2.
\]
Splitting $\Norm{w^m_*-\mathcal{D}^mg+\mathcal{N}^mq+\tilde{g}-u}_a$ into four terms leads to an estimate
\[
\begin{aligned}
&\Norm{w^m_*-\mathcal{D}^mg+\mathcal{N}^mq+\tilde{g}-u}_a \\
\leq& \Norm{w^\textnormal{glo}-\mathcal{D}^\textnormal{glo}\tilde{g}+\mathcal{N}^\textnormal{glo}q+\tilde{g}-u}+\Norm{\Brackets{\mathcal{D}^\textnormal{glo}-\mathcal{D}^m}\tilde{g}}_a+\Norm{\Brackets{\mathcal{N}^\textnormal{glo}-\mathcal{N}^m}q}_a+\Norm{w^\textnormal{glo}-w^m_*}_a\\
\leq& \frac{1}{\sqrt{\Lambda}}\Norm{f}_{s^{-1}}+c_\star\sqrt{C_\textnormal{ol}}\theta^{\frac{m-1}{2}}\Brackets{m+1}^{\frac{d}{2}}\Brackets{\Norm{\tilde{g}}_a+C_\textnormal{tr}\Norm{q}_{L^2(\GammaN)}}+\Norm{w^\textnormal{glo}-w^m_*}_a,
\end{aligned}
\]
where \cref{thm:glob error,cor:diri numn esti} are applied in the last line above. We are now left to estimate $\Norm{w^\textnormal{glo}-w^m_*}_a$.

The definition of $\mathcal{R}^\textnormal{glo}_i$ in \ref{eq:glob R} implies that $\mathcal{R}^\textnormal{glo}:L^2(\Omega)\rightarrow V^\textnormal{glo}_\textnormal{ms}$ is a surjective map. We are hence able to find $\varphi_*\in L^2(\Omega)$ such that $w^\textnormal{glo}=\mathcal{R}^\textnormal{glo} \varphi_*$. Meanwhile, setting $w_*^m=\mathcal{R}^m\varphi_*$, and the estimate of $\Norm{w^\textnormal{glo}-w^m_*}_a$ is transformed into $\Norm{\Brackets{\mathcal{R}^\textnormal{glo}-\mathcal{R}^m}\varphi_*}_a$. Utilizing similar techniques in proving \cref{cor:diri numn esti}, we can obtain
\[
\Norm{\Brackets{\mathcal{R}^\textnormal{glo}-\mathcal{R}^m}\varphi_*}_a^2+\Norm{\pi\Brackets{\mathcal{R}^\textnormal{glo}-\mathcal{R}^m}\varphi_*}_s^2\leq c_\star^2C_\textnormal{ol}\theta^{m-1}\Brackets{m+1}^d \Norm{\pi \varphi_*}_s^2.
\]
The variational form \ref{eq:glob R} yields a variational form for $\mathcal{R}^\textnormal{glo}$: for all $v\in V$,
\[
a(w^\textnormal{glo}, v)+s(\pi w^\textnormal{glo}, \pi v)=s(\pi \varphi_*, \pi v).
\]
According to \cref{lem:inverse}, it is possible to find $\hat{\varphi}_*\in V$ such that $\pi \hat{\varphi}_*=\pi \varphi_*$ and $\Norm{\hat{\varphi}_*}_a\leq C_\textnormal{inv}\Norm{\pi \varphi_*}_s$. Replacing $v$ with $\hat{\varphi}_*$ in the above variational form, we can obtain
\[
\begin{aligned}
\Norm{\pi \varphi_*}_s^2 &= a(w^\textnormal{glo},\hat{\varphi}_*)+s(\pi w^\textnormal{glo},\pi \varphi_*)\leq \Norm{\pi \varphi_*}_s\Brackets{C_\textnormal{inv}\Norm{w^\textnormal{glo}}_a+\Norm{\pi w^\textnormal{glo}}_s}\\
\Rightarrow \Norm{w^\textnormal{glo}-w^m_*}_a &\leq c_\star\sqrt{C_\textnormal{ol}}\theta^{\frac{m-1}{2}}\Brackets{m+1}^{\frac{d}{2}}\Brackets{C_\textnormal{inv}\Norm{w^\textnormal{glo}}_a+\Norm{\pi w^\textnormal{glo}}_s},
\end{aligned}
\]
and this finishes the proof.
\end{proof}

We can dig more estimates from this theorem. It is easy to see that $\Norm{f}_{s^{-1}} = O(H)$. Recalling \cref{eq:glob diri,eq:glob neum}, we have $\Norm{\mathcal{D}^\textnormal{glo}\tilde{g}}_a+\Norm{\pi\mathcal{D}^\textnormal{glo}\tilde{g}}_s=O(1)$ and $\Norm{\mathcal{N}^\textnormal{glo}q}_a+\Norm{\pi\mathcal{N}^\textnormal{glo}q}_s=O(1)$. From the proof of \cref{thm:glob error}, it holds that
\[
\begin{aligned}
\Norm{\pi w^\textnormal{glo}}_s&=\Norm{\pi \Brackets{u_0+\mathcal{D}^\textnormal{glo}\tilde{g}-\mathcal{N}^\textnormal{glo}q}}_s \leq \Norm{\pi u_0}_s + \Norm{\pi \mathcal{D}^\textnormal{glo}\tilde{g}}_s + \Norm{\pi \mathcal{N}^\textnormal{glo}q}_s \\
& \leq O(H^{-1}) + O(1).
\end{aligned}
\]
Meanwhile, by \cref{eq:glob w}, it follows that $\Norm{w^\textnormal{glo}}_a=O(1)$. If \emph{assuming} that $C_\textnormal{inv}=O(1)$, by choosing $m$ such that $\theta^{\frac{m-1}{2}}(m+1)^{d/2}=O(H^2)$, we will have
\[
\Norm{w^m-\mathcal{D}^mg+\mathcal{N}^mq+\tilde{g}-u}_a = O(H).
\]

\subsection{Extensions}
In this subsection, we will extend the CEM-GMsFEM to inhomogeneous Robin BVPs, and the model problem is stated as follows:
\[
\left\{
\begin{aligned}
&-\Div \Brackets{\Matrix{A}\nabla u} = f \ &\text{in}\  \Omega, \\
&\Matrix{b} u+\nu\cdot \Matrix{A}\nabla u = q \ &\text{on}\ \partial \Omega, \\
\end{aligned}
\right.
\]
where $\Matrix{b}(x) \in L^\infty(\partial \Omega)$ is a heterogeneous coefficient depends on certain physical laws. We propose an assumption for $\Matrix{b}$ to make this problem uniquely solvable in $V$:

\paragraph{A4} The function $\Matrix{b}(x)\geq 0$ for a.e. $x\in \partial\Omega$, and there exists a positive constant $b_0>0$ and a subset $\Gamma \subset \partial \Omega$ with $\Meas(\Gamma)>0$, such that $\Matrix{b}(x)\geq b_0$ for a.e. $x\in \Gamma$.

The bilinear form $a(w,v)$ needs to be modified as $\int_\Omega \Matrix{A}\nabla w\cdot \nabla v\dx x+\int_{\partial \Omega} \Matrix{b}wv\dx \sigma$, and for a subset $\omega \subset \Omega$ the norm $\Norm{\cdot}_{a(\omega)}$ is also redefined accordingly. Meanwhile, the eigenvalue problem for constructing the auxiliary space $V_i^\text{aux}$ will change into
\[
\int_{K_i}\Matrix{A}\nabla \phi_i\cdot \nabla v\dx x+\int_{\partial \Omega \cap \partial K_i} \Matrix{b}\phi_i v\dx \sigma=\lambda_i \int_{K_i} \tilde{\kappa} \phi_i v\dx x.
\]

The computational method consists of three steps:
\paragraph{Step1} Find $\mathcal{N}_i^m q\in V_i^m$ such that for all $v\in V_i^m$,
\[
a(\mathcal{N}_i^m q,v)+s(\pi \mathcal{N}_i^m q,\pi v)=\int_{\partial K_i \cap \partial \Omega} q v\dx \sigma\quad \forall v\in V_i^m.
\]
Then obtain $\mathcal{N}^mq=\sum_{i=1}^N \mathcal{N}_i^m q$.

\paragraph{Step2} Prepare the multiscale function space $V^m_\textnormal{ms}$ via \cref{eq:local basis varia} with a modified bilinear form $a(\cdot, \cdot)$.

\paragraph{Step3} Solve $w^m \in V^m_\textnormal{ms}$ such that for all $v \in V^m_\textnormal{ms}$,
\[
a(w^m, v)=\int_\Omega f v\dx x+\int_\GammaN q v\dx \sigma-a(\mathcal{N}^m q,v).
\]

\paragraph{Step4} Construct the numerical solution to approximate the real solution as
\[
u \approx w^m+\mathcal{N}^m q.
\]

The detailed numerial analysis is similar with \cref{subsec:analysis}, and we hence omit it here.

\section{Numerical experiments}\label{sec:experiment}

In this section, we will present several numerical experiments \footnote{All the experiments are conducted by using Python with Numpy \cite{Harris2020} and Scipy \cite{Virtanen2020} libraries, codes are hosted on Github (https://github.com/Laphet/CEM-GMsFEM.git).} to emphasize that the method proposed can retain accuracy in a high contrast coefficient setting. For simplicity, we take the domain $\Omega=(0, 1)\times (0, 1)$ and pointwise isotropic coefficients, i.e., $\Matrix{A}(x)=\kappa(x)\Matrix{I}$. In all experiments, the medium has two phases, which means $\kappa(x)$ only takes two values $\kappa_\textnormal{m}$ and $\kappa_\textnormal{I}$ with $1=\kappa_\textnormal{m} \ll \kappa_\textnormal{I}$. We will calculate reference solutions on a $400\times 400$ mesh with the bilinear Lagrange FEM, and $\kappa(x)$ is also generated from $400$px $\times 400$px figures. We display two different medium configurations in \cref{fig:media} and denote the first one by \textbf{cfg-a}, the second one by \textbf{cfg-b}. The coarse mesh size $H$ will be chosen from $\frac{1}{10}$, $\frac{1}{20}$, $\frac{1}{40}$ and $\frac{1}{80}$. For simplifying the implementation, We specially choose $\tilde{\kappa}=24 \kappa/H^2$ instead of the original definition \cref{eq:kappa}, and all theoretical results should still hold since we only require $\tilde{\kappa}$ satisfying $A\nabla \chi \cdot \nabla \chi \leq \tilde{\kappa}$ for any partition function $\chi$. Moreover, we always set $l_1=l_2=\cdots=l_N=l_\textnormal{m}$, i.e., the number of eigenvectors used to construct auxiliary space $V_i^\textnormal{aux}$ is fixed as $l_\textnormal{m}+1$.

\begin{figure}[tbhp]
\centering
\begin{subfigure}[tbhp]{0.495\textwidth}
\centering
\includegraphics[width=\linewidth]{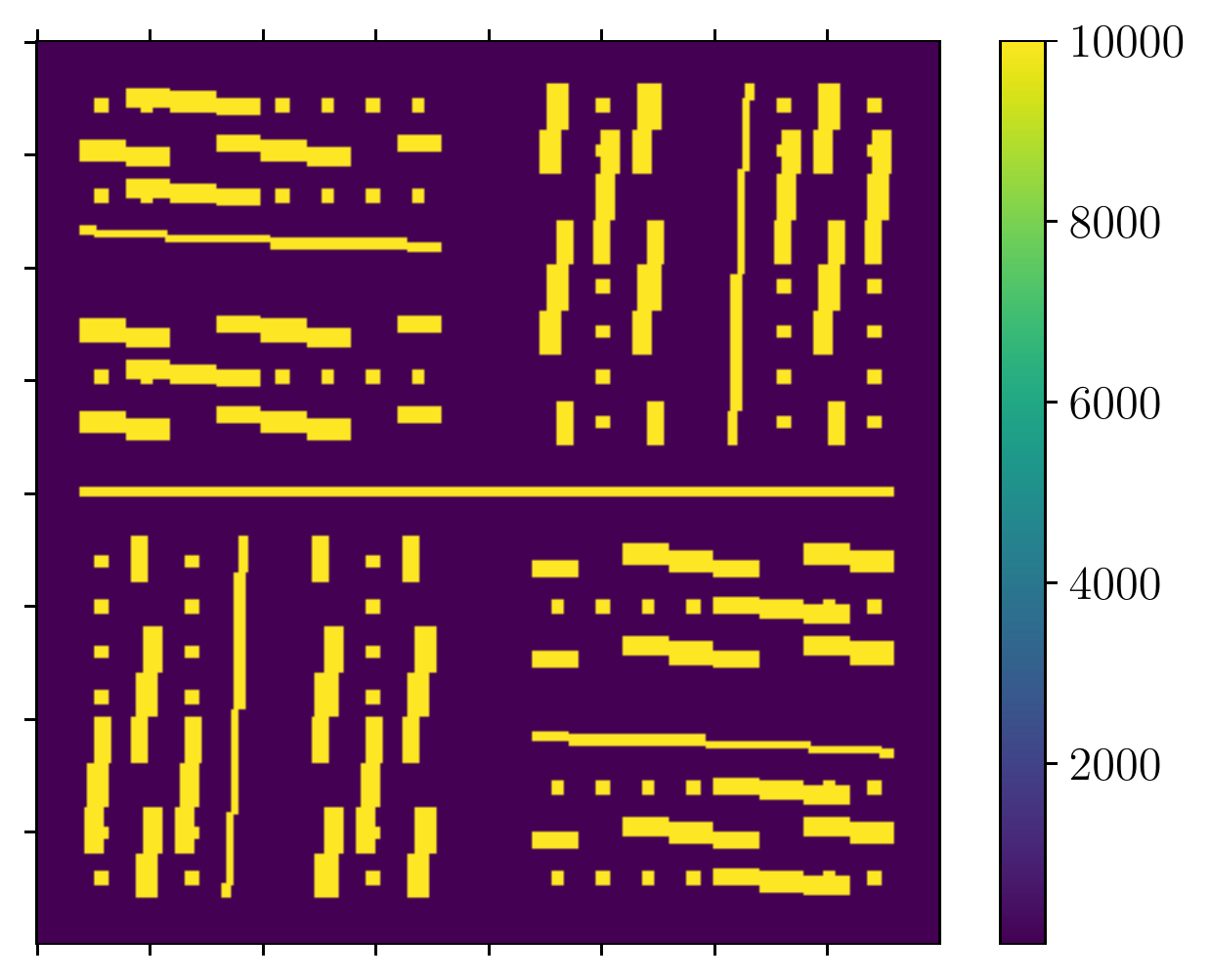}
\caption{}
\end{subfigure}
\begin{subfigure}[tbhp]{0.495\textwidth}
\centering
\includegraphics[width=\linewidth]{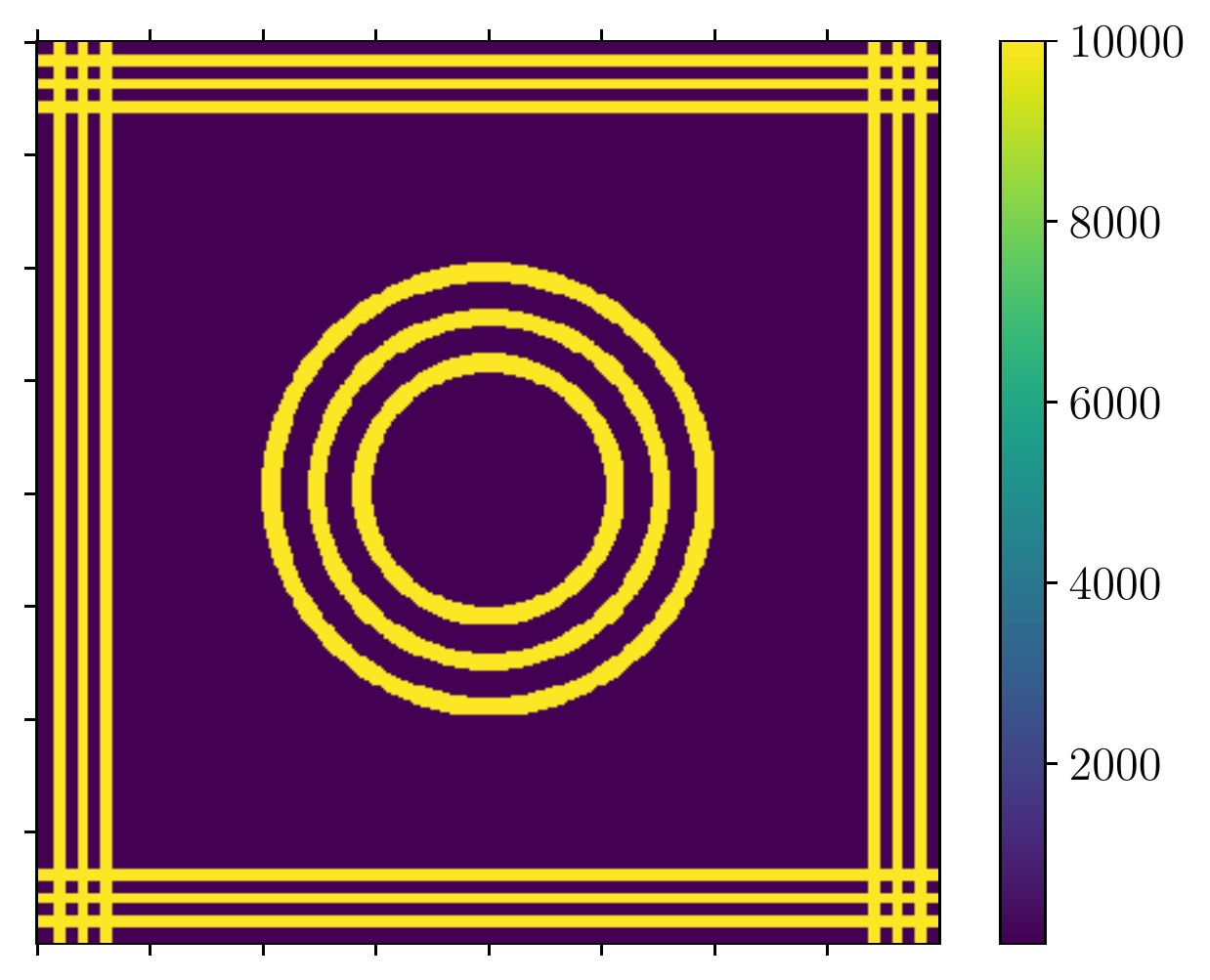}
\caption{}
\end{subfigure}
\begin{subfigure}[tbhp]{0.495\textwidth}
\centering
\includegraphics[width=\linewidth]{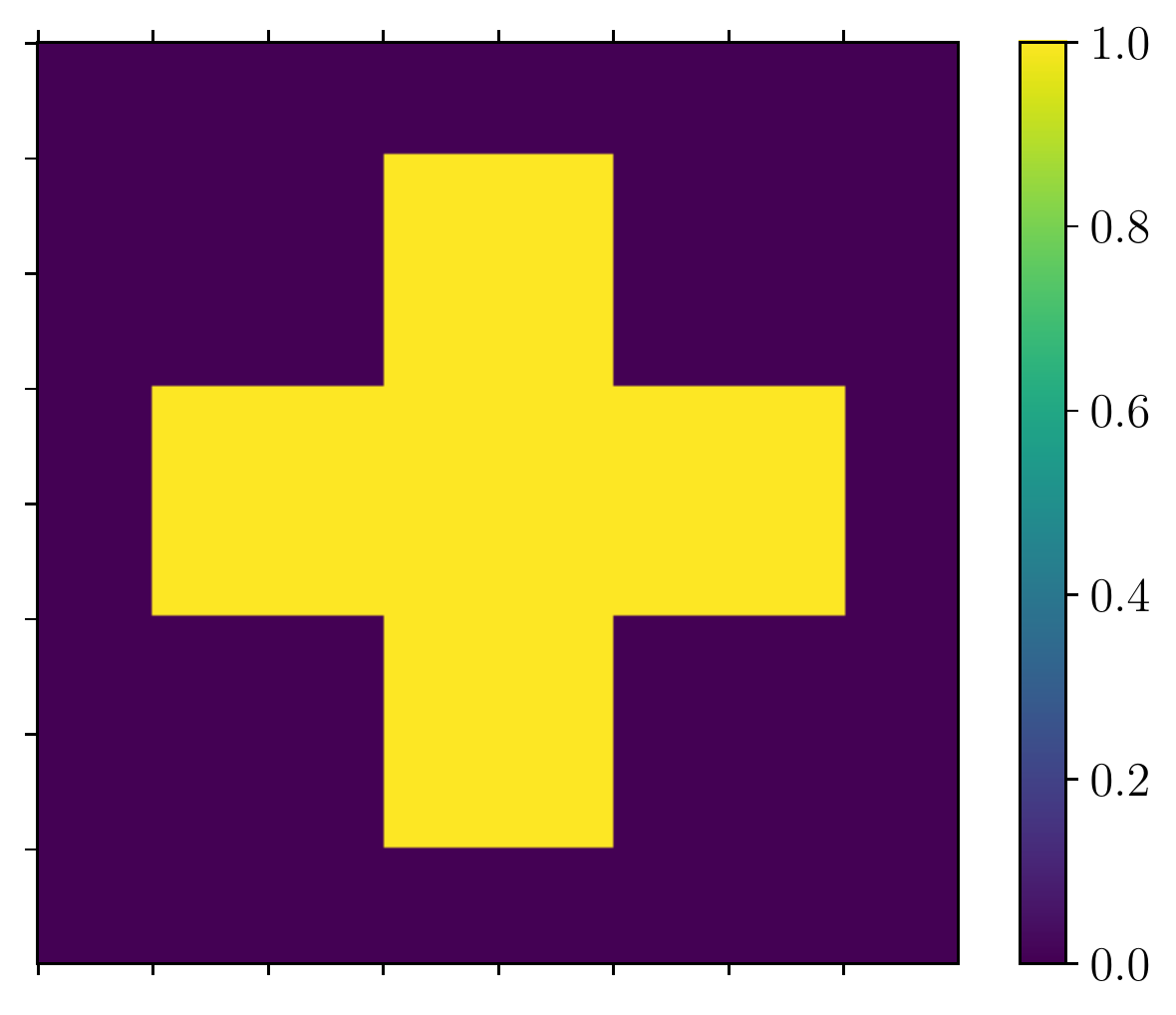}
\caption{}
\end{subfigure}
\caption{(a) The medium configuration \textbf{cfg-a}; (b) the medium configuration \textbf{cfg-b}; (c) the source term $f$.}\label{fig:media}
\end{figure}

\subsection{Model problem 1} \label{subsec:model1}
We consider the following model problem:
\begin{equation}\label{eq:Model1}
\left\{
\begin{aligned}
&-\Div\Brackets{\kappa(x_1, x_2)\nabla u} = f(x_1, x_2) \ &\forall (x_1, x_2)\in\Omega, \\
&u(x_1, x_2) = \tilde{g}(x_1, x_2)=x_1^2+\exp(x_1x_2) \  &\forall (x_1, x_2) \in \partial \Omega,
\end{aligned}
\right.
\end{equation}
where $\kappa$ is generated from \textbf{cfg-a} with various $\kappa_\textnormal{I}$ and the source term $f$ is a piecewise constant function whose value are taken via \cref{fig:media}.

We first examine the exponential convergence of $\Brackets{\mathcal{D}^m-\mathcal{D}^\textnormal{glo}}\tilde{g}$ by setting $H=1/20$ and $l_\textnormal{m}=2$. The results are reported in \cref{tab:Diri corr}, where we introduce notations
\[
\textnormal{D}_a^m \coloneqq \frac{\Norm{\Brackets{\mathcal{D}^m-\mathcal{D}^\textnormal{glo}}\tilde{g}}_a}{\Norm{\mathcal{D}^\textnormal{glo}\tilde{g}}_a} \quad \text{and} \quad \textnormal{D}_{L}^m \coloneqq \frac{\Norm{\Brackets{\mathcal{D}^m-\mathcal{D}^\textnormal{glo}}\tilde{g}}_{L^2(\Omega)}}{\Norm{\mathcal{D}^\textnormal{glo}\tilde{g}}_{L^2(\Omega)}}
\]
to measure errors, and $\Lambda'=\max_i \lambda_i^{l_\textnormal{m}}$ as a reference for $\Lambda$. We can see that the fluctuation of $\Lambda'$ with respect to the contrast ratio $\kappa_\textnormal{I}/\kappa_\textnormal{m}$ is almost unnoticeable in our test cases, which partly explains the effectiveness of the CEM-GMsFEM in high contrast problems. This observation is quite interest, and we can transform it into a formal mathematical conjecture: is it possible to bound $\lambda$ which is an eigenvalue of
\[
\int_\Omega \kappa \nabla u\cdot \nabla v\dx x = \lambda \int_\Omega \kappa u v\dx x \quad \forall v \in H^1(\Omega)
\]
 with $C\textnormal{diam}\Brackets{\Omega}^d$ while the constant $C$ is independent with $\sup_x \kappa/\inf_x \kappa$? As predicted in \cref{cor:diri numn esti}, the convergence behavior of $\textnormal{D}_a^m$ should solely depend on $\Lambda$, and our numerical results strongly support this argument. Comparing $\Norm{\mathcal{D}^\textnormal{glo}}_a$ with $\Norm{\mathcal{D}^\textnormal{glo}}_{L^2(\Omega)}$, we can find $\Norm{\mathcal{D}^\textnormal{glo}}_a$ is almost linearly dependent on contrast ratios while such a relation is not significant on $\Norm{\mathcal{D}^\textnormal{glo}}_{L^2(\Omega)}$. However, the exponential convergence property can allow us to compensate errors from $\Norm{\mathcal{D}^\textnormal{glo}}_a$ through a modest larger oversampling region.

\begin{table}[tbhp]
\centering
{\footnotesize
\caption{All the numerical tests are performed under $H=1/20$ and $l_\textnormal{m}=2$. The results include relative errors between $\mathcal{D}^\textnormal{glo}\tilde{g}$ and $\mathcal{D}^m\tilde{g}$ with respect to different oversampling layers $m$ and contrast ratios $\kappa_\textnormal{I}/\kappa_\textnormal{m}$, and the values of $\Lambda'=\max_i \lambda_i^{l_\textnormal{m}}$, $\Norm{\mathcal{D}^\textnormal{glo}\tilde{g}}_a$ and $\Norm{\mathcal{D}^\textnormal{glo}\tilde{g}}_{L^2(\Omega)}$ with different contrast ratios.}\label{tab:Diri corr}
\begin{tabular}{|c|c|c|c|}
\hline
 $\kappa_\textnormal{I}/\kappa_\textnormal{m}$ & \num{1.000e+4} & \num{1.000e+5} & \num{1.000e+6}  \\
\hline
$\Lambda'$ & \num{1.149e+00} & \num{1.149e+00} & \num{1.149e+00} \\
\hline
\hline
$\textnormal{D}_a^1$ & \num{1.052e-2} & \num{1.051e-2} & \num{1.051e-02} \\
\hline
$\textnormal{D}_a^2$ & \num{2.575e-4} & \num{2.568e-4} & \num{2.567e-04} \\
\hline
$\textnormal{D}_a^3$ & \num{6.679e-6} & \num{6.592e-6} & \num{6.583e-06} \\
\hline
$\textnormal{D}_a^4$ & $<$\num{1.000e-6} & $<$\num{1.000e-6} & $<$\num{1.000e-6} \\
\hline
$\Norm{\mathcal{D}^\textnormal{glo}}_a$ & \num{6.767e+01} & \num{2.140e+02} & \num{6.769e+02} \\

\hline
\hline
$\textnormal{D}_{L}^1$ & \num{3.944e-02} & \num{3.941e-02} & \num{3.941e-02} \\
\hline
$\textnormal{D}_{L}^2$ & \num{1.583e-03} & \num{1.582e-03} & \num{1.582e-03} \\
\hline
$\textnormal{D}_{L}^3$ & \num{1.440e-04} & \num{1.438e-04} & \num{1.438e-04} \\
\hline
$\textnormal{D}_{L}^4$ & $<$\num{1.000e-6} & $<$\num{1.000e-6} & $<$\num{1.000e-6} \\
\hline
$\Norm{\mathcal{D}^\textnormal{glo}}_{L^2(\Omega)}$ & \num{6.947e-03} & \num{6.952e-03} & \num{6.953e-03} \\
\hline
\end{tabular}
}
\end{table}

In the second experiment, we fix $\kappa_\textnormal{I}/\kappa_\textnormal{m}=10^4$ and $l_\text{m}=2$ to display how numerical solution errors, i.e.,
\[
\textnormal{E}_a^m\coloneqq \frac{\Norm{w^m-\mathcal{D}^m\tilde{g}-u_0}_a}{\Norm{u}_a} \quad \text{and}\quad \textnormal{E}_a^m\coloneqq \frac{\Norm{w^m-\mathcal{D}^m\tilde{g}-u_0}_{L^2(\Omega)}}{\Norm{u}_{L^2(\Omega)}}
\]
change with coarse mesh sizes $H$ and oversampling layers $m$. The results are reported in \cref{tab:Diri Hm}. An important observation is that errors will increase if we only reduce $H$ while not enlarge oversampling layers $m$, which is distinct from traditional finite element methods. Actually, if focusing on $m=1$, we can see that $\textnormal{E}_a^1$ grows like $O(H^{-1})$. Recalling the error expression in \cref{thm:main}, such a numerical evidence supports the estimate $\Norm{\pi w^\textnormal{glo}}_s=O(H^{-1})$. Due to the competence between $\Norm{\pi w^\textnormal{glo}}_s$ and $m$, the minimal error occurs in the setting ($H=1/40,m=4$) rather than ($H=1/80,m=4$). Another interesting point is that, when $m=1$, errors in the $L^2$ norm ($\approx 8\%$) are greatly smaller than ones in the energy norm ($70\%\sim 600\%$). Since $L^2$ norms can not reflect small oscillations of functions, we may image that by gradually increasing oversampling layers, numerical solutions obtained by the CEM-GMsFEM capture ``macroscale'' information first then resolve ``finescale'' details.

\begin{table}[tbhp]
\centering
{\footnotesize
\caption{The numerical errors of the model problem \cref{eq:Model1} in the  energy and $L^2$ norm with different coarse mesh sizes $H$ and oversampling layers $m$, while contrast ratios and eigenvector numbers are fixed as $\kappa_\textnormal{I}/\kappa_\textnormal{m}=10^4$ and $l_\textnormal{m}=2$.}\label{tab:Diri Hm}
\begin{tabular}{|c|c|c|c|c|}
\hline
$H$ & $1/10$ & $1/20$ & $1/40$ & $1/80$  \\
\hline
$\textnormal{E}_{a}^1$ & \num{7.702e-01} & \num{1.453e+00} & \num{3.065e+00} & \num{6.029e+00} \\
\hline
$\textnormal{E}_{a}^2$ & \num{4.023e-02} & \num{8.161e-02} & \num{2.005e-01} & \num{4.401e-01} \\
\hline
$\textnormal{E}_{a}^3$ & \num{2.662e-03} & \num{2.632e-03} & \num{7.753e-03} & \num{2.301e-02} \\
\hline
$\textnormal{E}_{a}^4$ & \num{2.308e-03} & \num{4.283e-04} & \num{3.041e-04} & \num{1.035e-03} \\
\hline
\hline
$\textnormal{E}_{L}^1$ & \num{6.957e-02} & \num{6.603e-02} & \num{7.445e-02} & \num{8.062e-02} \\
\hline
$\textnormal{E}_{L}^2$ & \num{6.789e-04} & \num{3.237e-03} & \num{1.664e-02} & \num{4.581e-02} \\
\hline
$\textnormal{E}_{L}^3$ & \num{7.070e-05} & \num{7.016e-06} & \num{2.860e-05} & \num{2.315e-04} \\
\hline
$\textnormal{E}_{L}^4$ & \num{6.638e-05} & \num{4.857e-06} & \num{1.079e-06} & \num{1.079e-06} \\
\hline
\end{tabular}
}
\end{table}

In the third experiment, we focus on numerical errors with different contrast ratios and oversampling layers, and record the results in \cref{tab:Diri ContrM}, where we set $H=1/80$ and $l_\textnormal{m}=2$. It is not surprise that high contrast ratios will deteriorate numerical accuracy. However, the exponential convergence in $m$ alleviates this deterioration. Actually, the numerical accuracy of ($\kappa_\textnormal{I}/\kappa_\textnormal{m}=10^6,m=4$) improves almost $20$ times comparing to ($\kappa_\textnormal{I}/\kappa_\textnormal{m}=10^6,m=3$). Similarly, as emphasized in the second experiment, the $L^2$ norm errors ($\approx 8\%$) are significantly smaller than energy norm errors ($190\%\sim 600\%$) when $m=1$. This phenomenon reveals the potential of the CEM-GMsFEM in discovering homogenized surrogate models of high contrast problems \cite{Chung2018a}.

\begin{table}[tbhp]
\centering
{\footnotesize
\caption{The numerical errors the model problem \cref{eq:Model1} in the  energy and $L^2$ norm with different contrast rations $\kappa_\textnormal{I}/\kappa_\textnormal{m}$ and oversampling layers $m$, while coarse mesh sizes and eigenvector numbers are fixed as $H=1/80$ and $l_\textnormal{m}=2$.} \label{tab:Diri ContrM}
\begin{tabular}{|c|c|c|c|c|}
\hline
$\kappa_\textnormal{I}/\kappa_\textnormal{m}$ & \num{1.000e+3} & \num{1.000e+4} & \num{1.000e+5} & \num{1.000e+6} \\
\hline
$\textnormal{E}_{a}^1$ & \num{1.944e+00} & \num{6.029e+00} & \num{1.902e+01} & \num{6.013e+01} \\
\hline
$\textnormal{E}_{a}^2$ & \num{1.790e-01} & \num{4.401e-01} & \num{1.061e+00} & \num{3.002e+00} \\
\hline
$\textnormal{E}_{a}^3$ & \num{7.882e-03} & \num{2.301e-02} & \num{7.097e-02} & \num{2.075e-01} \\
\hline
$\textnormal{E}_{a}^4$ & \num{3.943e-04} & \num{1.035e-03} & \num{3.141e-03} & \num{9.882e-03} \\
\hline
$\Norm{u}_a$ & \num{2.826e+00} & \num{2.841e+00} & \num{2.843e+00} & \num{2.843e+00} \\
\hline
\hline
$\textnormal{E}_{L}^1$ & \num{7.866e-02} & \num{8.062e-02} & \num{8.086e-02} & \num{8.089e-02} \\
\hline
$\textnormal{E}_{L}^2$ & \num{1.330e-02} & \num{4.581e-02} & \num{6.632e-02} & \num{7.250e-02} \\
\hline
$\textnormal{E}_{L}^3$ & \num{2.968e-05} & \num{2.315e-04} & \num{2.174e-03} & \num{1.657e-02} \\
\hline
$\textnormal{E}_{L}^4$ & \num{1.079e-06} & \num{1.079e-06} & \num{9.175e-06} & \num{1.241e-05} \\
\hline
$\Norm{u}_{L^2(\Omega)}$ & \num{1.853e+00} & \num{1.853e+00} & \num{1.853e+00} & \num{1.853e+00} \\
\hline
\end{tabular}
}
\end{table}

We test numerical errors with different $l_m$ in the fourth experiment, and the other parameters are set as ($H=1/80,\kappa_\textnormal{I}/\kappa_\textnormal{m}=10^6,m=3$). It is natural that providing more eigenvectors in constructing $V^\textnormal{aux}$ performance will be better. However, the numerical report \cref{tab:Diri eig} shows that error may not be reduced proportionally to eigenvector numbers. Because adding more eigenvectors improves convergence rates through increasing $\Lambda$, which we only know the asymptotic behavior when $l_\textnormal{m}\rightarrow \infty$ (i.e., Weyl's law \cite{Ivrii2016}). In practice, $3$ or $4$ eigenvectors is enough for obtaining satisfying accuracy, and there are several rules of thumb in determining how many eigenvectors should be applied when high conductivity channels appear \cite{Chung2018}.

\begin{table}[tbhp]
\centering
{\footnotesize
\caption{The numerical errors the model problem \cref{eq:Model1} in the  energy and $L^2$ norm with different numbers ($l_\textnormal{m}+1$) of eigenvectors applied in constructing $V^\textnormal{aux}_i$, while other parameters are fixed as ($H=1/80,\kappa_\textnormal{I}/\kappa_\textnormal{m}=10^6,m=3$).} \label{tab:Diri eig}
\begin{tabular}{|c|c|c|c|c|}
\hline
$l_\textnormal{m}$ & $0$ & $1$ & $2$ & $3$ \\
\hline
$\textnormal{E}_{a}^3$ & \num{8.002e-01} & \num{4.932e-01} & \num{2.301e-02} & \num{2.109e-02} \\
\hline
$\textnormal{E}_{L}^3$ & \num{6.297e-02} & \num{3.589e-02} & \num{2.315e-04} & \num{2.002e-04} \\
\hline
\end{tabular}
}
\end{table}

\subsection{Model problem 2}
In this subsection, we study the following inhomogeneous Neumann BVP:
\begin{equation} \label{eq:Model2}
\left\{
\begin{aligned}
&-\Div\Brackets{\kappa(x_1, x_2)\nabla u} = f(x_1, x_2) \ &\forall (x_1, x_2)\in\Omega, \\
&u(x_1, x_2) = 0 \  &\forall (x_1, x_2) \in (0, 1) \times \{1\}, \\
&\nu \cdot \kappa \nabla u = q(x_1, x_2) = -1 \ &\forall (x_1, x_2) \in \{0\} \times (0, 1), \\
&\nu \cdot \kappa \nabla u = q(x_1, x_2) = 1 \ &\forall (x_1, x_2) \in \{1\} \times (0, 1), \\
&\nu \cdot \kappa \nabla u = q(x_1, x_2) = 1 \ &\forall (x_1, x_2)  \in (0, 0.5) \times \{0\}, \\
&\nu \cdot \kappa \nabla u = q(x_1, x_2) = 0 \ &\forall (x_1, x_2)  \in (0.5, 1) \times \{0\}, \\
\end{aligned}
\right.
\end{equation}
where $\kappa$ is generated from \textbf{cfg-b} with various $\kappa_\textnormal{I}$ and the source term $f$ is a piecewise constant function whose value are taken via \cref{fig:media}.

We first plot $\mathcal{N}^\textnormal{glo}q$ under different contrast ratios ($\kappa_\textnormal{I}/\kappa_\textnormal{m}=1,10,10^2,10^3$) in \cref{fig:Neum corr}, and note that when $\kappa_\textnormal{I}/\kappa_\textnormal{m}=1$ the medium is homogeneous. Due to high conductivity channels contacting with boundary, we can see from \cref{fig:Neum corr} that on the four corners of $\Omega$, $\mathcal{N}^\textnormal{glo}q$ of the test case ($\kappa_\textnormal{I}/\kappa_\textnormal{m}=1$) on $\GammaN$ is different from rest cases. Moreover, all figures show that $\mathcal{N}^\textnormal{glo}q$ decays rapidly away from boundaries, which is already predicted by \cref{cor:diri numn esti}. We also collect several numerical results \cref{tab:Neum corr} to illustrate this point, where notations
\[
\textnormal{N}_a^m \coloneqq \frac{{\Norm{\Brackets{\mathcal{N}^m-\mathcal{N}^\textnormal{glo}}q}}_a}{\Norm{\mathcal{N}^\textnormal{glo}q}_a} \quad \text{and}  \quad \textnormal{N}_{L}^m \coloneqq \frac{\Norm{\Brackets{\mathcal{N}^m-\mathcal{N}^\textnormal{glo}}q}_{L^2(\Omega)}}{\Norm{\mathcal{D}^\textnormal{glo}q}_{L^2(\Omega)}}
\]
are adopted. We can read from \cref{tab:Neum corr} that changes of $\Lambda'$ is quite small, which has been observed in \cref{subsec:model1}. Owing to this fact, even we multiply by $10$ on contrast ratios column by column, convergence histories of $\textnormal{N}_a^m$ along with $m$ are still similar. The major difference with \cref{tab:Diri corr} is that $\Norm{\mathcal{N}^\textnormal{glo}}_a$ does not grow linearly with respect to contrast ratios. This can be explained by the estimate $\Norm{\mathcal{N}^\textnormal{glo}}_a \leq C_\textnormal{tr} \Norm{q}_{L^2(\GammaN)}$, and we may postulate that the influence of contrast ratios on $C_\textnormal{tr}$ is limited.

\begin{figure}[tbhp]
\centering
\begin{subfigure}[tbhp]{0.495\textwidth}
\centering
\includegraphics[width=\linewidth]{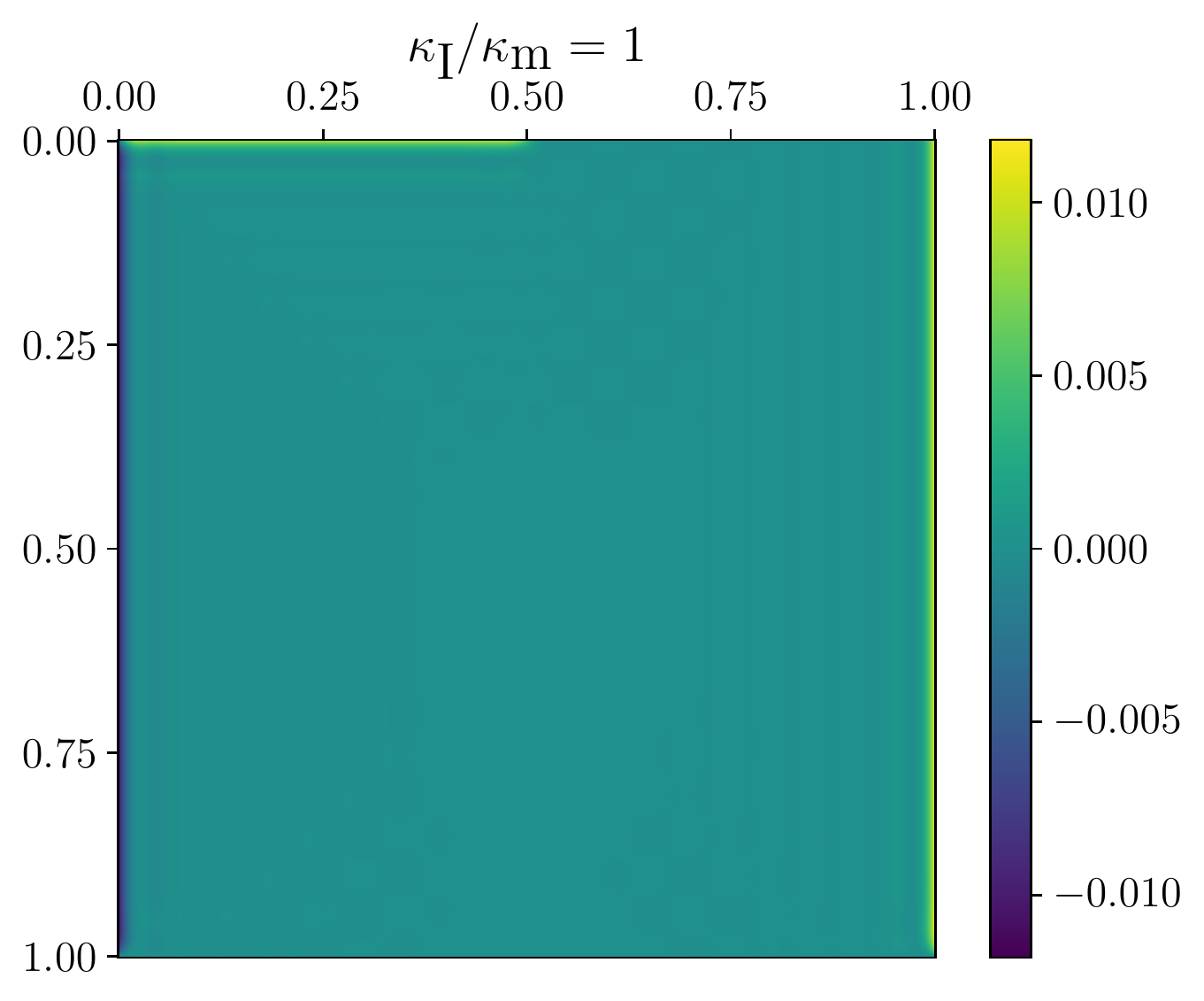}
\caption{}
\end{subfigure}
\begin{subfigure}[tbhp]{0.495\textwidth}
\centering
\includegraphics[width=\linewidth]{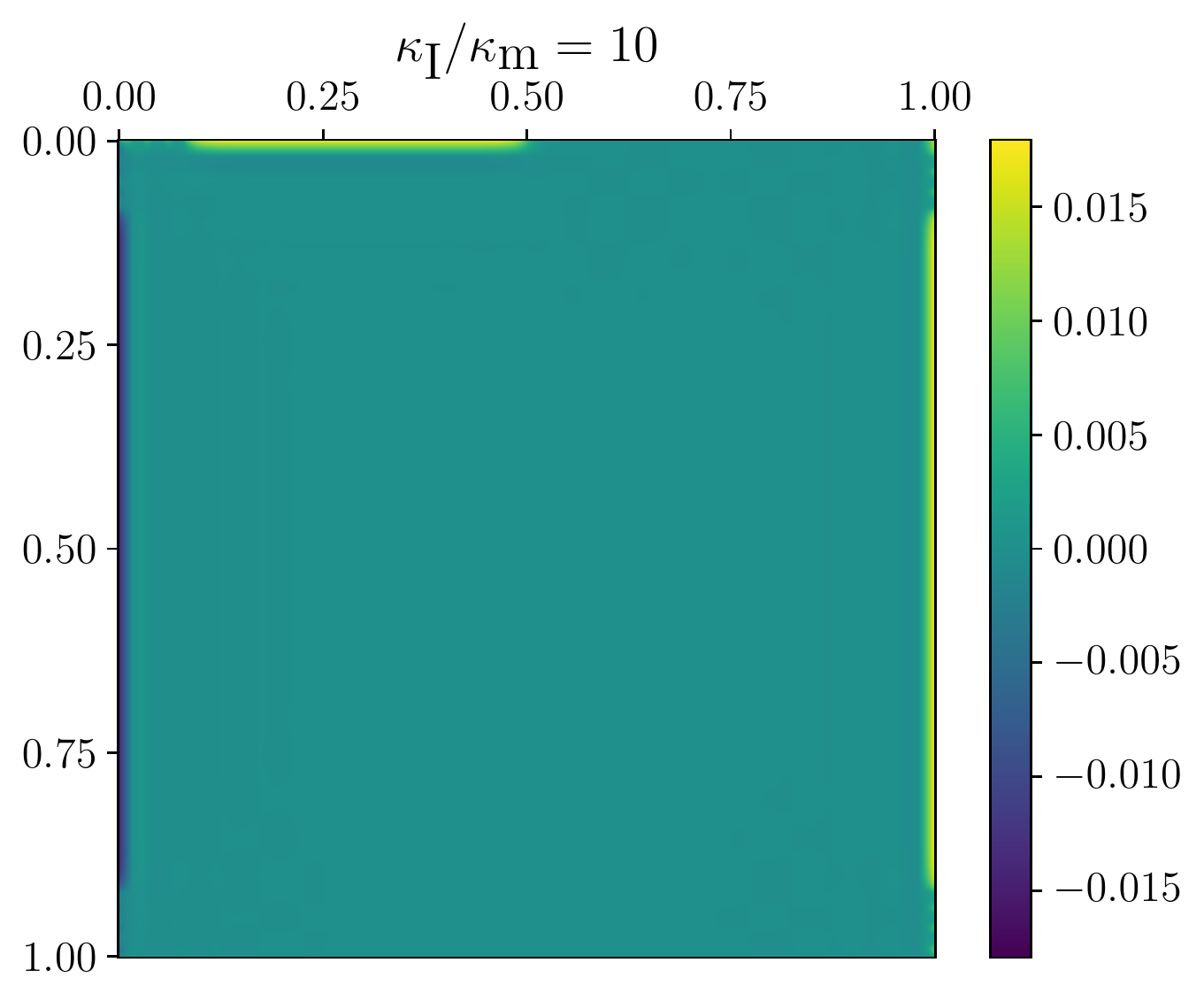}
\caption{}
\end{subfigure}
\begin{subfigure}[tbhp]{0.495\textwidth}
\centering
\includegraphics[width=\linewidth]{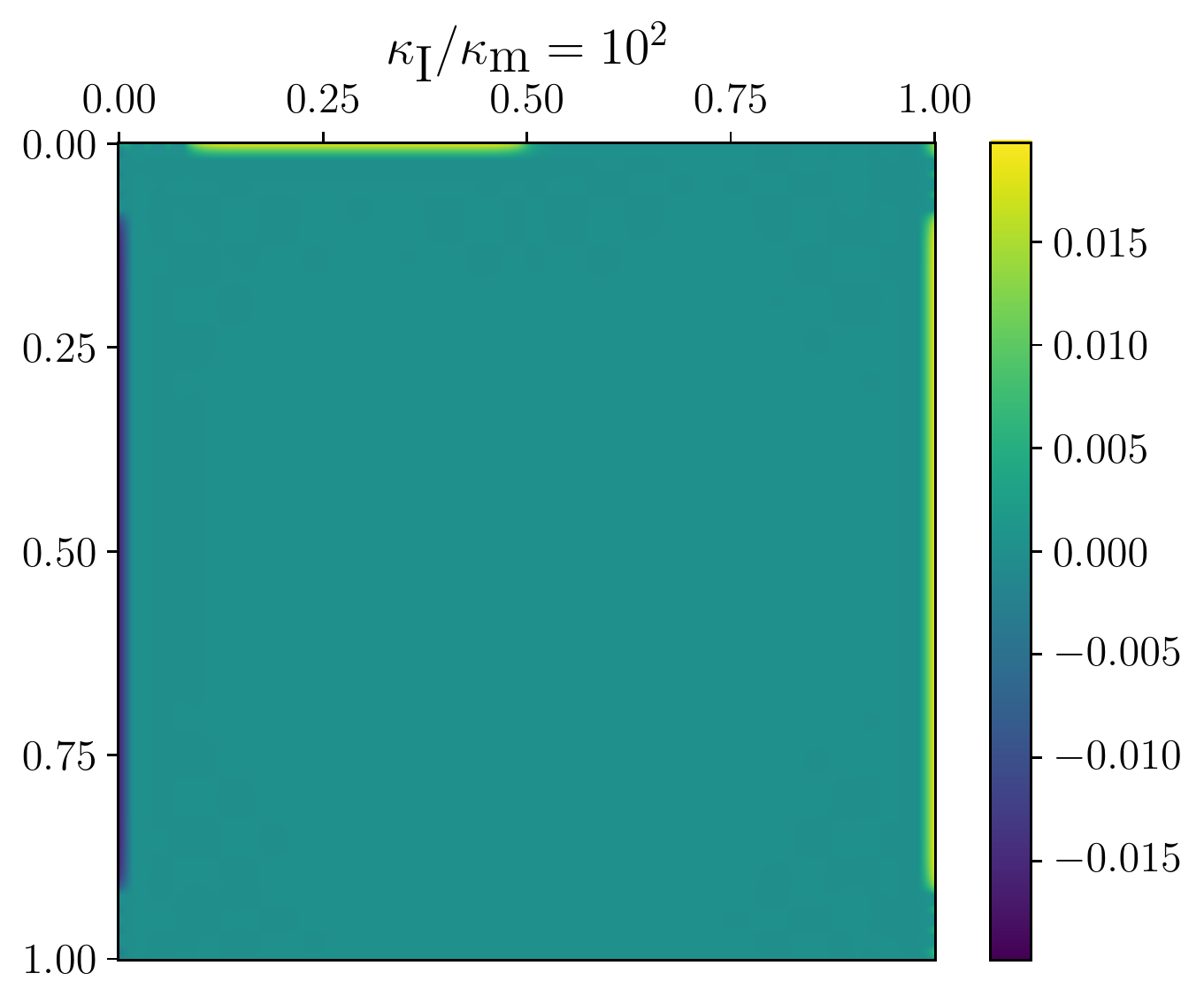}
\caption{}
\end{subfigure}
\begin{subfigure}[tbhp]{0.495\textwidth}
\centering
\includegraphics[width=\linewidth]{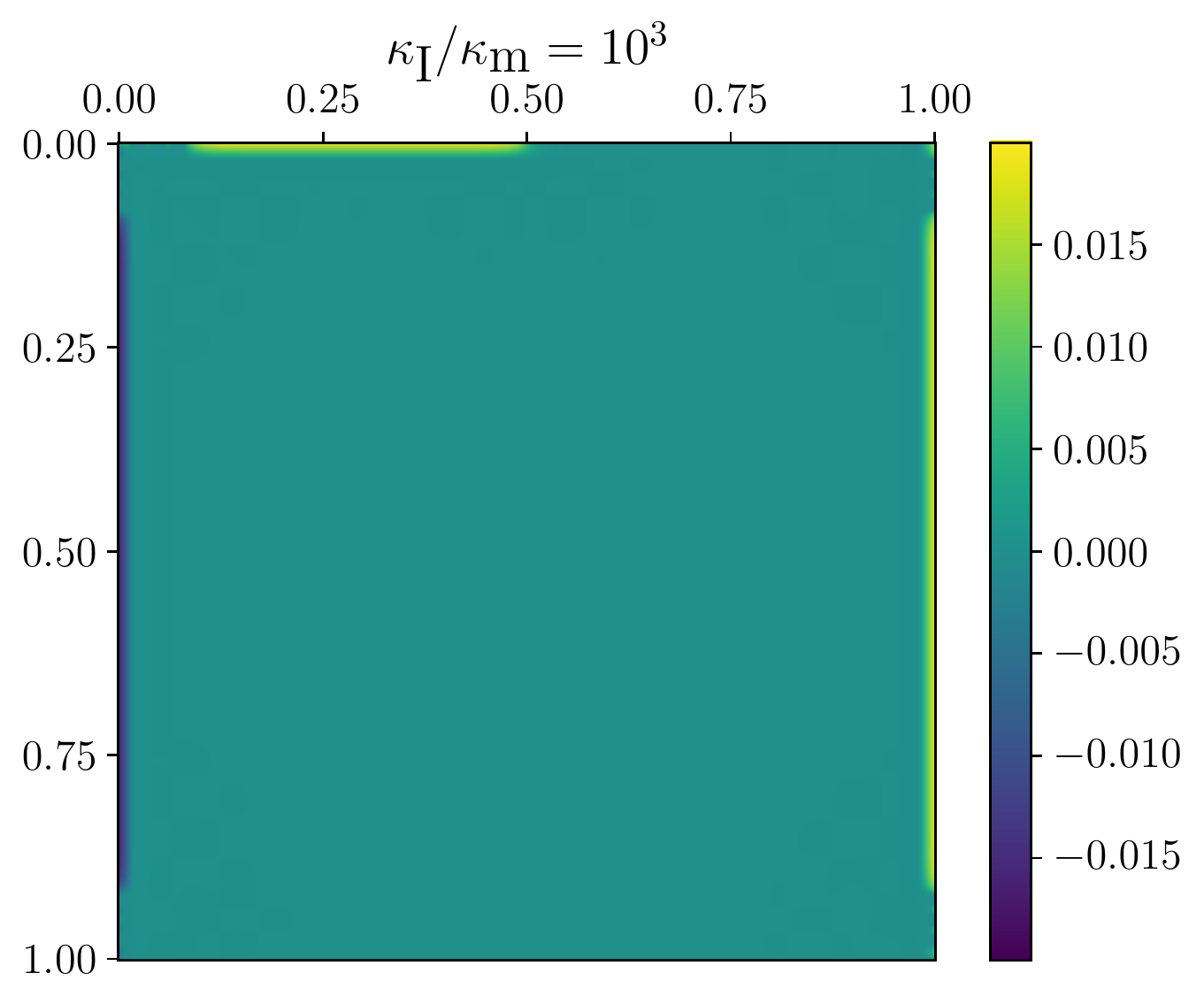}
\caption{}
\end{subfigure}
\caption{Plot $\mathcal{N}^\textnormal{glo}q$ with different contrast ratios. Note that (a) is distinct from (b) to (d) on the four corners of the domain.}\label{fig:Neum corr}
\end{figure}

\begin{table}[tbhp]
\centering
{\footnotesize
\caption{All the numerical tests are performed under $H=1/20$ and $l_\textnormal{m}=2$. The results include relative errors between $\mathcal{N}^\textnormal{glo}\tilde{g}$ and $\mathcal{N}^m\tilde{g}$ with respect to different oversampling layers $m$ and contrast ratios $\kappa_\textnormal{I}/\kappa_\textnormal{m}$, and the values of $\Lambda'=\max_i \lambda_i^{l_\textnormal{m}}$, $\Norm{\mathcal{N}^\textnormal{glo}q}_a$ and $\Norm{\mathcal{N}^\textnormal{glo}q}_{L^2(\Omega)}$ with different contrast ratios.} \label{tab:Neum corr}
\begin{tabular}{|c|c|c|c|c|c|}
\hline
$\kappa_\textnormal{I}/\kappa_\textnormal{m}$ &  \num{1.e+2} & \num{1.e+3} & \num{1.e+4} & \num{1.e+5} & \num{1.e+6}  \\
\hline
$\Lambda'$ &  \num{8.726e-01} & \num{8.802e-01} & \num{8.810e-01} & \num{8.811e-01} & \num{8.811e-01} \\
\hline
\hline
$\textnormal{N}_a^1$ & \num{9.941e-03} & \num{9.949e-03} & \num{9.949e-03} & \num{9.949e-03} & \num{9.949e-03} \\
\hline
$\textnormal{N}_a^2$  & \num{3.133e-04} & \num{1.911e-04} & \num{1.760e-04} & \num{1.709e-04} & \num{1.709e-04} \\
\hline
$\Norm{\mathcal{N}^\textnormal{glo}q}_a$  & \num{1.979e-01} & \num{1.988e-01} & \num{1.989e-01} & \num{1.989e-01} & \num{1.989e-01} \\
\hline
\hline
$\textnormal{N}_{L}^1$  & \num{8.127e-03} & \num{8.475e-03} & \num{8.467e-03} & \num{8.467e-03} & \num{8.467e-03} \\
\hline
$\textnormal{N}_L^2$  &  $<$\num{1.000e-6} &  $<$\num{1.000e-6} &  $<$\num{1.000e-6} &  $<$\num{1.000e-6} &  $<$\num{1.000e-6} \\
\hline
$\Norm{\mathcal{N}^\textnormal{glo}q}_{L^2(\Omega)}$ &  \num{2.338e-03} & \num{2.360e-03} & \num{2.362e-03} & \num{2.362e-03} & \num{2.362e-03} \\
\hline
\end{tabular}
}
\end{table}

The next experiment is parallel to the third experiment in \cref{subsec:model1}, and the main objective is verifying the effectiveness of the method proposed in high contrast settings. We choose $H=1/80$ and $l_\textnormal{m}=2$, and the results are listed in \cref{tab:Neum ContrM}, where a similar convergence pattern as \cref{tab:Diri ContrM} can be observed. We again emphasized that to achieve expected numerical accuracy, enlarging oversampling regions is necessary.

\begin{table}[tbhp]
\centering
{\footnotesize
\caption{The numerical errors of the model problem \cref{eq:Model2} in the energy and $L^2$ norm with different contrast rations $\kappa_\textnormal{I}/\kappa_\textnormal{m}$ and oversampling layers $m$, while coarse mesh sizes and eigenvector numbers are fixed as $H=1/80$ and $l_\textnormal{m}=2$.} \label{tab:Neum ContrM}
\begin{tabular}{|c|c|c|c|c|}
\hline
$\kappa_\textnormal{I}/\kappa_\textnormal{m}$ & \num{1.000e+3} & \num{1.000e+4} & \num{1.000e+5} & \num{1.000e+6} \\
\hline
$\textnormal{E}_{a}^1$ & \num{5.847e-01} & \num{4.263e-01} & \num{3.956e-01} & \num{3.922e-01} \\
\hline
$\textnormal{E}_{a}^2$ & \num{2.535e-01} & \num{3.499e-01} & \num{3.784e-01} & \num{3.820e-01} \\
\hline
$\textnormal{E}_{a}^3$ & \num{1.290e-02} & \num{3.155e-02} & \num{9.060e-02} & \num{2.321e-01} \\
\hline
$\textnormal{E}_{a}^4$ & \num{7.495e-04} & \num{1.415e-03} & \num{3.644e-03} & \num{1.117e-02} \\
\hline
$\Norm{u}_a$ & \num{2.508e-01} & \num{2.206e-01} & \num{2.168e-01} & \num{2.164e-01} \\
\hline
\hline
$\textnormal{E}_{L}^1$ & \num{9.582e-01} & \num{9.575e-01} & \num{9.578e-01} & \num{9.578e-01} \\
\hline
$\textnormal{E}_{L}^2$ & \num{3.579e-01} & \num{7.894e-01} & \num{9.280e-01} & \num{9.461e-01} \\
\hline
$\textnormal{E}_{L}^3$ & \num{9.784e-04} & \num{6.477e-03} & \num{5.321e-02} & \num{3.491e-01} \\
\hline
$\textnormal{E}_{L}^4$ & $<$\num{1.000e-6} & $<$\num{1.000e-6} & \num{6.489e-05} & \num{7.804e-04} \\
\hline
$\Norm{u}_{L^2(\Omega)}$ & \num{1.942e-02} & \num{1.575e-02} & \num{1.541e-02} & \num{1.538e-02} \\
\hline
\end{tabular}
}
\end{table}

\subsection{Model problem 3}
In this subsection, we consider the following inhomogeneous Robin BVP:
\begin{equation} \label{eq:Model3}
\left\{
\begin{aligned}
&-\Div\Brackets{\kappa(x_1, x_2)\nabla u} = f(x_1, x_2) \ &\forall (x_1, x_2)\in\Omega, \\
&\nu \cdot \kappa(x_1,x_2) \nabla u + \Matrix{b}(x_1,x_2) u= q(x_1, x_2) \ &\forall (x_1, x_2) \in \partial \Omega,
\end{aligned}
\right.
\end{equation}
where $\kappa(x_1,x_2)$ is generated from \textbf{cfg-b} with $\Matrix{b}(x_1,x_2)=\kappa(x_1,x_2)$, and $q(x_1, x_2)$ is defined as
\[
q(x_1,x_2)=\left\{
\begin{aligned}
&-1\quad \text{on}\quad \{0\}\times(0,1), \\
&1\quad \text{on}\quad \{1\}\times(0,1), \\
&1\quad \text{on}\quad (0,0.5)\times\{0\}, \quad 0\quad \text{on}\quad(0.5,1)\times\{0\},\\
&0\quad \text{on}\quad (0,0.5)\times\{1\}, \quad -1\quad \text{on}\quad(0.5,1)\times\{1\}.
\end{aligned}
\right.
\]

Note that $\Matrix{b}(x_1,x_2)$ is also high contrast now, and traditional methods need a very fine mesh to resolve the channel structure and special treatments to solve final linear systems. The numerical results of our method are listed in \cref{tab:Robin ContrM}, which shows that the reliability of our method even with a $10^6$ contrast ratio.

\begin{table}[tbhp]
\centering
{\footnotesize
\caption{The numerical errors of the model problem \cref{eq:Model3} in the  energy and $L^2$ norm with different contrast rations $\kappa_\textnormal{I}/\kappa_\textnormal{m}$ and oversampling layers $m$, while coarse mesh sizes and eigenvector numbers are fixed as $H=1/80$ and $l_\textnormal{m}=2$.} \label{tab:Robin ContrM}
\begin{tabular}{|c|c|c|c|c|}
\hline
$\kappa_\textnormal{I}/\kappa_\textnormal{m}$ & \num{1.000e+3} & \num{1.000e+4} & \num{1.000e+5} & \num{1.000e+6} \\
\hline
$\textnormal{E}_{a}^1$ & \num{5.293e-01} & \num{3.960e-01} & \num{3.711e-01} & \num{3.684e-01} \\
\hline
$\textnormal{E}_{a}^2$ & \num{2.135e-01} & \num{3.248e-01} & \num{3.544e-01} & \num{3.581e-01} \\
\hline
$\textnormal{E}_{a}^3$ & \num{1.091e-02} & \num{2.933e-02} & \num{8.487e-02} & \num{2.175e-01} \\
\hline
$\textnormal{E}_{a}^4$ & \num{6.657e-04} & \num{1.315e-03} & \num{3.414e-03} & \num{1.046e-02} \\
\hline
$\Norm{u}_a$ & \num{2.584e-01} & \num{2.342e-01} & \num{2.311e-01} & \num{2.308e-01} \\
\hline
\hline
$\textnormal{E}_{L}^1$ & \num{9.494e-01} & \num{9.555e-01} & \num{9.565e-01} & \num{9.565e-01} \\
\hline
$\textnormal{E}_{L}^2$ & \num{3.613e-01} & \num{7.899e-01} & \num{9.269e-01} & \num{9.449e-01} \\
\hline
$\textnormal{E}_{L}^3$ & \num{1.008e-03} & \num{6.499e-03} & \num{5.315e-02} & \num{3.487e-01} \\
\hline
$\textnormal{E}_{L}^4$ & $<$\num{1.000e-6} & $<$\num{1.000e-6} & \num{6.490e-05} & \num{7.795e-04} \\
\hline
$\Norm{u}_{L^2(\Omega)}$ & \num{1.687e-02} & \num{1.554e-02} & \num{1.541e-02} & \num{1.540e-02} \\
\hline
\end{tabular}
}
\end{table}

\bibliographystyle{unsrtnat}
\bibliography{refs}  






\end{document}